\documentclass[12pt,a4paper]{amsart}
 \usepackage{amssymb,amsthm}
 \usepackage{multirow}
 \usepackage{dsfont}
 \newtheorem{thm}{Theorem}[section]

\newtheorem{prop}[thm]{Proposition}
\newtheorem{lem}[thm]{Lemma}

\newtheorem{rem}[thm]{Remark}

\theoremstyle{definition}
\newtheorem{defn}[thm]{Definition}

 \begin{document}
 \title[Effects of edge addition or removal]
 {Effects of edge addition or removal on the nullity of a graph}
 \author{Ahmet Batal}
 \address{Department of Mathematics, Izmir Institute of Technology, 35430, Urla, Izmir, TURKEY}
\email{ahmetbatal@iyte.edu.tr}
 \date{}
\begin{abstract}
Lights Out is a game which can be played on any graph $G$. Initially we have a configuration which assigns one of the two states on or off to each vertex. The aim of the game is to turn all vertices to off state for an initial configuration by activating some vertices where each activation switches the state of the vertex and all of its neighbors. If the aim of the game can be accomplished for all initial configurations then $G$ is called always solvable. We call the dimension of the kernel of the closed neighborhood matrix of the graph over the field $\mathbb{Z}_2$, nullity of $G$. It turns out that $G$ is always solvable if and only if its nullity is zero. Moreover, the number of solutions of a given configuration is also determined by the nullity. We investigate the problem of how nullity changes when an edge is added to or removed from a graph. As a result we show that for every graph with positive nullity there exists an edge whose removal decreases the nullity. Conversely, we show that for every always solvable graph which is not an even graph with odd order, there exists an edge whose addition increases the nullity. We also show that if an always solvable graph is not even, then there is an edge whose removal increases the nullity.
\end{abstract}

\subjclass[2020]{05C50, 05C57}
\keywords{Lights Out, all-ones problem, odd dominating set, parity domination, parity dimension.}
 \maketitle
\section{Introduction}
Lights Out is a game which can be played on any undirected simple graph $G(V,E)$. The rules of the game are as follows. Initially every vertex has one of the two states \emph{on} or \emph{off}. Every push on a vertex switches the state of the vertex and all of its neighbors. This push is called \emph{the activation of the vertex}. The aim of the game is to turn all vertices to \emph{off} state by applying an activation pattern. It can be easily observed that the order in which the vertices are pushed is unimportant for the final result. Moreover pushing a vertex even number of times is equivalent to not pushing it and pushing a vertex odd number of times is equivalent to pushing it once. Therefore, any activation pattern can be identified by the set of activated vertices.  On the other hand, any initial configuration can be identified by the set of \emph{on} state vertices. Note also that there is a one to one correspondence between the subsets of $V(G)$ and elements of $\mathbb{Z}_2^n$ where $n$ is the order of $V(G)$. Indeed, if we enumerate $V(G)$ as $V(G)=\{v_1,...,v_n\}$ then every set $S$ can be characterized by its characteristic column vector $\mathbf{x}_S=(s_1,...,s_n)^t \in \mathbb{Z}_2^n,$ where $\mathbf{x}_S(v_i):=s_i=1$ if $v_i\in S$, and $s_i=0$ otherwise. Conversely, every element $\mathbf{x}$ of $\mathbb{Z}_2^n$ is the characteristic vector of its support set which consists of those vertices $v$ such that $\mathbf{x}(v)=1$. Hence, any activation pattern and initial configuration can be identified by the vectors of $\mathbb{Z}_2^n$ as well.

If an activation pattern set $P$ (activation pattern vector $\mathbf{p}$) turns all vertices to off state for a given initial configuration set $C$ (initial configuration vector $\mathbf{c}$) , then $C$ ($\mathbf{c}$) is called solvable and $P$ ($\mathbf{p}$) is called a solving pattern for $C$ ($\mathbf{c}$). We may also say $P$ solves $C$ ($\mathbf{p}$ solves $\mathbf{c}$). The graph $G$ is called \emph{always solvable} if every configuration is solvable.

For a vertex $v$, the set $N[v]=\{u\in V \;|\; u=v \;\text{or}\; u \;\text{is adjacent to}\; v\}$ is called \emph{the closed neighborhood set of vertex} $v$. Observe that $P$ solves $C$ if and only if $N[v] \cap P$ is odd for all $v\in C$ and even otherwise. We define the dot product between two vectors $\mathbf{x}$, $\mathbf{y} \in \mathbb{Z}_2^n$ as $\mathbf{x}\cdot \mathbf{y} := \mathbf{x}^t \mathbf{y}$. Then, the above observation can be rephrased as $P$ solves $C$ if and only if $\mathbf{x}_{N[v]} \cdot \mathbf{x}_P=1$ for all $v\in C$ and $0$ otherwise. As it is first observed by Sutner \cite{Sutner89},\cite{Sutner90}, this is equivalent to say that $P$ solves $C$ if and only if
\begin{equation}\label{Npa}
N\mathbf{x}_P=\mathbf{x}_C
\end{equation}
over the field $\mathbb{Z}_2$, where $N:=N(G)$ is the \emph{the closed neighborhood matrix of} $G$ whose $i$th column (equivalently row) is the characteristic vector of $N[v_i]$.

Let us denote the kernel of $N$  by $Ker(N)$. Elements of $Ker(N)$ are called \emph{null patterns}. $\nu(G):=dim(Ker(N(G))$ is called the nullity of $G$. The above formulation \eqref{Npa} between an initial configuration and its solving pattern leads us several observations \cite{Sutner89},\cite{Sutner90},\cite{Amin98}:

(\emph{O 1}) A graph $G$ is always solvable  \emph{iff} $\nu(G)=0 $.

(\emph{O 2}) Number of solving patterns for a given configuration is $2^{\nu(G)}$. Indeed, if $\mathbf{p}$ solves $\mathbf{c}$, then $\mathbf{p}+\boldsymbol{\ell}$ solves $\mathbf{c}$ as well for every null pattern $\boldsymbol{\ell}$.

(\emph{O 3}) A configuration vector (set) is solvable  \emph{iff} it (its characteristic vector) is orthogonal to every null pattern.

Suppose there is a vector $\mathbf{s}$ solving the all-ones configuration $\mathbf{1}:=(1,...,1)^t$. Then, the support set $S$ of $\mathbf{s}$ is an \emph{odd dominating set}, i.e.; $N[v] \cap S$ is odd for all $v\in V$. Therefore, we call $\mathbf{s}$ an \emph{odd dominating pattern}. Interestingly, every graph has an odd dominating pattern, or equivalently all-ones configuration $\mathbf{1}$ is solvable in every graph  \cite{Sutner89} (see also \cite{Caro96}, \cite{Cowen99}, \cite{Erikson04}).

How the nullity of a graph changes when a vertex is removed or what the nullity of the emergent graph becomes when two graphs are joined together by a single or multiple edges were investigated by several authors \cite{Amin02}, \cite{Ballard19}, \cite{Giffen13}, \cite{Edwards10} (see also \cite{Berman21} and \cite{Keough24} for recent articles on the subject). However, no previous studies appear to have investigated how nullity changes when an edge is added to or removed from a graph. In this paper we fill this gap. Indeed, we do this in a more general context. To understand the generality of our context let $G$ be a graph, $A_1$, $A_2$ be two disjoint subsets of $V(G)$. Then we define the following operation on $G$. For every pair of vertices $u\in A_1$ and $v\in A_2$ we add the edge $uv$ (the edge joining $u$ and $v$) to $G$ if $u$ and $v$ are not adjacent and we remove the edge $uv$ from $G$ if $u$ and $v$ are adjacent. So if $A_1$ and $A_2$ consist of single vertices, then the operation corresponds to simple edge removal or addition, depending on whether $u$ and $v$ are adjacent in $G$ or not. Then we investigate how nullity changes when this operation is applied to the graph under different scenarios as we stated in Proposition \ref{NN0} through Proposition \ref{HHN}. The results are summarized in Table \ref{table1}.

These results enable us to obtain important corollaries such as the existence of edges in a graph whose addition/removal inreases/decreases the nullity of the graph. More precisely, we show in Theorem \ref{thmr} that for a graph with nonzero nullity, there always exists an edge whose removal decreases the nullity. Conversely, we show in Theorem \ref{thma} that if an always solvable graph is not an even graph with odd order, then there exists an edge whose addition increases the nullity. Moreover, we prove in Theorem \ref{thmsss} that if an always solvable graph is not an even graph, then there exists an edge whose removal increases the nullity. In addition to these main theorems we prove several more results in one of which we give a characterization of always solvable graphs.

 \section{Nullity change under the general edge addition and removal operation}
\begin{lem}
\label{lem1}
Let $\mathbf{x}$ satisfy $\mathbf{x}\cdot \boldsymbol{\ell_0}=1$ for some null pattern $\boldsymbol{\ell_0}$. Then, $\mathbf{x}\cdot \boldsymbol{\ell}=1$ for exactly half of the null patterns $\boldsymbol{\ell}$.
\end{lem}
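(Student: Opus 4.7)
The plan is to recognize this as a standard fact about nonzero linear functionals on a finite $\mathbb{Z}_2$-vector space, applied to the map $\boldsymbol{\ell}\mapsto \mathbf{x}\cdot\boldsymbol{\ell}$ on $Ker(N)$. Concretely, I will define $\phi:Ker(N)\to\mathbb{Z}_2$ by $\phi(\boldsymbol{\ell}):=\mathbf{x}\cdot\boldsymbol{\ell}$, and observe that $\phi$ is $\mathbb{Z}_2$-linear because the dot product is bilinear. The hypothesis gives $\phi(\boldsymbol{\ell_0})=1$, so $\phi$ is surjective.

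From surjectivity I will conclude via an elementary coset argument: the preimage $\phi^{-1}(0)=Ker(\phi)$ is a subspace of $Ker(N)$, and $\phi^{-1}(1)=\boldsymbol{\ell_0}+Ker(\phi)$ is the complementary coset. Since translation by $\boldsymbol{\ell_0}$ is a bijection between these two sets, they have the same cardinality, and together they exhaust $Ker(N)$. Because $|Ker(N)|=2^{\nu(G)}$, each has exactly $2^{\nu(G)-1}$ elements, i.e.\ half of the null patterns.

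If I prefer a more hands-on presentation (matching the combinatorial flavor of the paper), I can instead pair every null pattern $\boldsymbol{\ell}$ with $\boldsymbol{\ell}+\boldsymbol{\ell_0}$. This is a fixed-point-free involution on $Ker(N)$ (it is well-defined because $Ker(N)$ is closed under addition and contains $\boldsymbol{\ell_0}$, and fixed-point-free because $\boldsymbol{\ell_0}\neq\mathbf{0}$). In each pair, $\mathbf{x}\cdot(\boldsymbol{\ell}+\boldsymbol{\ell_0})=\mathbf{x}\cdot\boldsymbol{\ell}+\mathbf{x}\cdot\boldsymbol{\ell_0}=\mathbf{x}\cdot\boldsymbol{\ell}+1$, so exactly one partner yields value $1$. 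Summing over all pairs gives the claim.

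There is no real obstacle here; the only thing to be careful about is to cite (or silently use) that $\boldsymbol{\ell_0}\in Ker(N)$ implies the coset/involution is well-defined inside $Ker(N)$, which follows since $Ker(N)$ is a subspace. The argument is purely linear-algebraic and does not use any graph-theoretic property of $N$ beyond the fact that its kernel is a $\mathbb{Z}_2$-subspace.
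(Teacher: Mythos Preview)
Your proposal is correct and essentially matches the paper's proof: the paper uses exactly your involution argument, pairing each null pattern $\boldsymbol{\ell}$ with $\boldsymbol{\ell}+\boldsymbol{\ell_0}$ and noting that the two partners give opposite values of $\mathbf{x}\cdot\boldsymbol{\ell}$. Your linear-functional/coset formulation is just a more abstract restatement of the same idea.
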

\begin{proof}
Equality of the number of null patterns $\boldsymbol{\ell}$ satisfying $\mathbf{x}\cdot \boldsymbol{\ell}=0$ and $\mathbf{x}\cdot \boldsymbol{\ell}=1$ follows from the fact that if a null pattern $\boldsymbol{\ell}$ satisfies $\mathbf{x}\cdot \boldsymbol{\ell}=0$, then the null pattern $\boldsymbol{\ell}+ \boldsymbol{\ell_0}$ satisfies $\mathbf{x}\cdot (\boldsymbol{\ell}+\boldsymbol{\ell_0})=1$; and if a null pattern $\boldsymbol{\ell}$ satisfies $\mathbf{x}\cdot \boldsymbol{\ell}=1$, then the null pattern $\boldsymbol{\ell}+ \boldsymbol{\ell_0}$ satisfies $\mathbf{x}\cdot (\boldsymbol{\ell}+\boldsymbol{\ell_0})=0$.
\end{proof}

Lemma \ref{lem1}, together with (\emph{O 2}) and (\emph{O 3}) gives us the following proposition.

\begin{prop}
\label{prop2}
Let $\mathbf{c}$ be a solvable configuration on a graph $G$ and $A$ be a subset of $V(G)$. Then, $A$ is not solvable if and only if $\mathbf{x}_A\cdot \mathbf{p}=1$ for exactly half of the solving patterns $\mathbf{p}$ for $\mathbf{c}$. $A$ is solvable if and and only if either

Case i) $\mathbf{x}_A\cdot \mathbf{p}=1$  for all solving patterns $\mathbf{p}$ for $\mathbf{c}$, or

Case ii) $\mathbf{x}_A\cdot \mathbf{p}=0$ for all solving patterns $\mathbf{p}$ for $\mathbf{c}.$
\end{prop}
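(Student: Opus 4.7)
The plan is to fix an arbitrary solving pattern $\mathbf{p_0}$ for $\mathbf{c}$ and use (\emph{O 2}) to parametrize the full set of solving patterns as $\mathbf{p_0}+\boldsymbol{\ell}$, where $\boldsymbol{\ell}$ ranges over $Ker(N)$. Then the quantity that decides everything is
\[
\mathbf{x}_A\cdot (\mathbf{p_0}+\boldsymbol{\ell}) \;=\; \mathbf{x}_A\cdot \mathbf{p_0} \;+\; \mathbf{x}_A\cdot \boldsymbol{\ell},
\]
so the behaviour of the dot product over all solving patterns is determined by the behaviour of $\mathbf{x}_A\cdot \boldsymbol{\ell}$ over $Ker(N)$, shifted by the constant $\mathbf{x}_A\cdot \mathbf{p_0}\in\mathbb{Z}_2$.

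Next I split according to whether $A$ is solvable. If $A$ is solvable, then by (\emph{O 3}) we have $\mathbf{x}_A\cdot\boldsymbol{\ell}=0$ for every null pattern $\boldsymbol{\ell}$, so $\mathbf{x}_A\cdot \mathbf{p}=\mathbf{x}_A\cdot \mathbf{p_0}$ for every solving pattern $\mathbf{p}$; depending on whether this common value is $1$ or $0$, we land in Case i) or Case ii). This already gives the ``if'' direction of the solvable characterization (the two cases are mutually exclusive and each implies not all solving patterns are split evenly, provided $2^{\nu(G)}\ge 1$).

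If $A$ is not solvable, (\emph{O 3}) yields some null pattern $\boldsymbol{\ell_0}$ with $\mathbf{x}_A\cdot \boldsymbol{\ell_0}=1$, so Lemma \ref{lem1} applies with $\mathbf{x}=\mathbf{x}_A$ and tells us that $\mathbf{x}_A\cdot \boldsymbol{\ell}=1$ for exactly $2^{\nu(G)-1}$ of the null patterns. Adding the constant $\mathbf{x}_A\cdot \mathbf{p_0}$ does not change the fact that the values $0$ and $1$ each occur exactly half the time, so $\mathbf{x}_A\cdot \mathbf{p}=1$ for exactly half of the solving patterns $\mathbf{p}=\mathbf{p_0}+\boldsymbol{\ell}$.

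Finally, to close the biconditionals, I note that Cases i), ii) of the solvable branch and the ``exactly half'' conclusion of the non-solvable branch are pairwise exclusive outcomes that together cover all possibilities for how $\mathbf{x}_A\cdot \mathbf{p}$ can distribute over the solving patterns; so the two directions of each ``iff'' are automatic. There is no real obstacle here; the only mild care needed is to keep track that the bijection $\boldsymbol{\ell}\mapsto\mathbf{p_0}+\boldsymbol{\ell}$ from $Ker(N)$ to the set of solving patterns for $\mathbf{c}$ allows us to transport the half/half count of Lemma \ref{lem1} from null patterns to solving patterns.
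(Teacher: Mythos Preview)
Your proof is correct and is exactly the argument the paper has in mind: the paper's own ``proof'' is the single sentence that Lemma~\ref{lem1} together with (\emph{O 2}) and (\emph{O 3}) yields the proposition, and you have simply unpacked that sentence by parametrizing the solving patterns for $\mathbf{c}$ as $\mathbf{p_0}+\boldsymbol{\ell}$ via (\emph{O 2}), invoking (\emph{O 3}) to split on solvability of $A$, and applying Lemma~\ref{lem1} in the non-solvable case. One cosmetic remark: the clause ``provided $2^{\nu(G)}\ge 1$'' is vacuous and can be dropped; the mutual exclusivity of the three outcomes is automatic once you note that if $A$ is not solvable then $\nu(G)\ge 1$, so ``exactly half'' is a genuine half distinct from ``all'' or ``none''.
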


Motivating ourselves by the above proposition we make the following definition.

\begin{defn}
\label{defn1}
We say a subset $A$ of $V(G)$ is half odd activated ($HO$) if it is not solvable. If $A$ is solvable, for a given solvable configuration $\mathbf{c}$, we say $A$ is $\mathbf{c}$-always odd activated ($\mathbf{c}$-$AO$), $\mathbf{c}$-never odd activated ($\mathbf{c}$-$NO$) if \emph{Case (i)} or \emph{Case (ii)} holds, respectively. In the case where $\mathbf{c}=\mathbf{1}$, instead of saying $A$ is $\mathbf{1}$-always odd activated, $\mathbf{1}$-never odd activated; we simply say $A$ is always odd activated ($AO$), never odd activated ($NO$), respectively. Further, we say a vertex $v$ is half activated, always activated, or never activated if $\{v\}$ is half odd activated, always odd activated, or never odd activated, respectively.
\end{defn}

\begin{rem}
\label{rem}
By (\emph{O 3}), we see that for a vertex $u$, $\{u\}$ is not solvable if and only if $\boldsymbol{\ell}(u)=\mathbf{x}_{\{u\}}\cdot \boldsymbol{\ell} =1$ for some null pattern $\boldsymbol{\ell}$.
\end{rem}

\begin{rem}
Let $\mathbf{s}$ be an odd dominating pattern and $\mathbf{p}$ be a solving pattern for $\mathbf{x}_A$. Then $\mathbf{x}_A\cdot \mathbf{s}= N\mathbf{p} \cdot \mathbf{s} = \mathbf{p} \cdot N\mathbf{s} =\mathbf{p} \cdot \mathbf{1}$. Hence, always odd activated sets are precisely those sets whose solving patterns have odd cardinality and never odd activated sets are precisely those sets whose solving patterns have even cardinality.
\end{rem}

For a given vector $\mathbf{x}$, let $\overline{\mathbf{x}}:=\mathbf{x}+\mathbf{1}$.

\begin{lem}
\label{lemx}
A vector $\mathbf{x}$ (a set $A$) is solvable if and only if $\overline{\mathbf{x}}$ ($A^c$) is solvable.
\end{lem}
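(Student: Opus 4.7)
The plan is to reduce the claim to observation (\emph{O 3}) and exploit the fact (already recalled in the introduction) that the all-ones configuration $\mathbf{1}$ is solvable in every graph.

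First I would translate the statement into orthogonality language. By (\emph{O 3}), $\mathbf{x}$ is solvable iff $\mathbf{x}\cdot \boldsymbol{\ell}=0$ for every null pattern $\boldsymbol{\ell}\in Ker(N)$, and similarly for $\overline{\mathbf{x}}=\mathbf{x}+\mathbf{1}$. So it suffices to show that for every null pattern $\boldsymbol{\ell}$,
\begin{equation*}
\overline{\mathbf{x}}\cdot \boldsymbol{\ell} \;=\; \mathbf{x}\cdot \boldsymbol{\ell} + \mathbf{1}\cdot \boldsymbol{\ell} \;=\; \mathbf{x}\cdot \boldsymbol{\ell}.
\end{equation*}

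The key step is therefore to observe that $\mathbf{1}\cdot \boldsymbol{\ell}=0$ for every null pattern $\boldsymbol{\ell}$. This is exactly the content of the fact recalled in the introduction (Sutner's theorem) that $\mathbf{1}$ is solvable in every graph, combined once more with (\emph{O 3}): solvability of $\mathbf{1}$ is equivalent to $\mathbf{1}$ being orthogonal to every null pattern.

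With this in hand the lemma is immediate: the two conditions ``$\mathbf{x}\cdot \boldsymbol{\ell}=0$ for all $\boldsymbol{\ell}\in Ker(N)$'' and ``$\overline{\mathbf{x}}\cdot \boldsymbol{\ell}=0$ for all $\boldsymbol{\ell}\in Ker(N)$'' coincide, and the set version follows because $\mathbf{x}_{A^c}=\mathbf{x}_A+\mathbf{1}=\overline{\mathbf{x}_A}$. I do not foresee a real obstacle here; the whole argument is a one-line linear-algebra manipulation, and the only nontrivial input is the already-cited universal solvability of $\mathbf{1}$.
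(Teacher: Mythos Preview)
Your argument is correct. Both your proof and the paper's rely on the same nontrivial input, Sutner's theorem that $\mathbf{1}$ is solvable, but you reach the conclusion by a slightly different route. The paper argues on the image side: the set of solvable configurations is the column space of $N$, hence closed under addition, so if $\mathbf{x}$ is solvable then $\overline{\mathbf{x}}=\mathbf{x}+\mathbf{1}$ is solvable as a sum of two solvable configurations, and the converse follows from $\overline{\overline{\mathbf{x}}}=\mathbf{x}$. You instead work on the kernel side via (\emph{O 3}), showing that $\overline{\mathbf{x}}\cdot\boldsymbol{\ell}=\mathbf{x}\cdot\boldsymbol{\ell}$ for every null pattern because $\mathbf{1}\cdot\boldsymbol{\ell}=0$. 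The two arguments are dual to one another and equally short; neither buys anything substantive over the other, though the paper's version avoids invoking (\emph{O 3}) and is perhaps marginally more direct.
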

\begin{proof}
Note that if a configuration $\mathbf{x}$ is solvable, then $\overline{\mathbf{x}}=\mathbf{1}+\mathbf{x}$ is solvable since it is the sum of two solvable configurations. Converse also holds since $\overline{\overline{\mathbf{x}}}=\mathbf{x}$. Note also that if $\mathbf{x}$ is the characteristic vector of a set $A$, then $\overline{\mathbf{x}}$ is the characteristic vector of $A^c$.
\end{proof}

\begin{lem}
\label{lem2}
For a given graph $G$ let the configuration $\mathbf{x}$ be solvable. Then $\mathbf{x}\cdot \mathbf{p}=0$ for all solving patterns $\mathbf{p}$ for $\overline{\mathbf{x}}$.
\end{lem}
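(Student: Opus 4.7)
My plan is to reduce the claim to the single identity $\mathbf{q}^t N \mathbf{q} = \mathbf{q} \cdot \mathbf{1}$ over $\mathbb{Z}_2$, which is where the $1$'s on the diagonal of $N$ (coming from $v_i \in N[v_i]$) do the real work.

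First I would fix a solving pattern $\mathbf{q}$ for $\mathbf{x}$, which exists by hypothesis, so that $N\mathbf{q} = \mathbf{x}$. Since $N$ is symmetric, I can transfer $N$ from $\mathbf{q}$ over to $\mathbf{p}$:
$$\mathbf{x}\cdot \mathbf{p} = (N\mathbf{q})^t \mathbf{p} = \mathbf{q}^t (N\mathbf{p}) = \mathbf{q}^t \overline{\mathbf{x}} = \mathbf{q}\cdot \mathbf{x} + \mathbf{q}\cdot \mathbf{1}.$$
This reduces the problem to establishing the $\mathbb{Z}_2$ identity $\mathbf{q}\cdot \mathbf{x} = \mathbf{q}\cdot \mathbf{1}$.

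Next I would observe that $\mathbf{q}\cdot \mathbf{x} = \mathbf{q}^t N\mathbf{q}$ and evaluate this quadratic form directly: expanding gives
$$\mathbf{q}^t N \mathbf{q} = \sum_{i} N_{ii}\, q_i^2 + 2\sum_{i<j} N_{ij}\, q_i q_j.$$
Over $\mathbb{Z}_2$ the cross terms vanish, each diagonal entry $N_{ii}$ equals $1$, and $q_i^2 = q_i$, so $\mathbf{q}^t N\mathbf{q} = \sum_i q_i = \mathbf{q}\cdot \mathbf{1}$. Substituting back yields $\mathbf{x}\cdot \mathbf{p} = \mathbf{q}\cdot \mathbf{1} + \mathbf{q}\cdot \mathbf{1} = 0$, as required.

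There is no real obstacle beyond spotting the self-dot identity $\mathbf{q}^t N \mathbf{q} \equiv \mathbf{q}\cdot \mathbf{1} \pmod 2$; the rest is one line of algebra using the symmetry of $N$. An alternative route, should the direct computation be unwelcome, would be to apply Proposition \ref{prop2} with $A = \mathrm{supp}(\mathbf{x})$ and $\mathbf{c} = \overline{\mathbf{x}}$ (which is solvable by Lemma \ref{lemx}), so that it suffices to exhibit a \emph{single} solving pattern $\mathbf{p}_0$ for $\overline{\mathbf{x}}$ with $\mathbf{x}\cdot \mathbf{p}_0 = 0$; taking $\mathbf{p}_0 = \mathbf{q} + \mathbf{s}$ for any odd dominating pattern $\mathbf{s}$ and repeating the diagonal argument works, but the direct calculation above seems cleaner.
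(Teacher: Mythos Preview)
Your proof is correct and follows essentially the same approach as the paper: both arguments hinge on the identity $\mathbf{v}^t N \mathbf{v} = \mathbf{1}\cdot\mathbf{v}$ over $\mathbb{Z}_2$, which the paper obtains via the decomposition $N = I + L + L^t$ and you obtain by directly expanding the quadratic form. Your version is marginally more self-contained, since by transferring $N$ onto $\mathbf{p}$ first and working with an arbitrary solving pattern $\mathbf{q}$ for $\mathbf{x}$ you avoid invoking the existence of an odd dominating pattern $\mathbf{s}$, whereas the paper uses $\mathbf{p}+\mathbf{s}$ in that role and applies the quadratic identity to $\mathbf{p}$ instead.
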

\begin{proof}
Let $N$ be the closed neighborhood matrix of $G$, $\mathbf{p}$ be a solving patterns for $\overline{\mathbf{x}}$, and $\mathbf{s}$ be an odd dominating pattern of $G$. Then  $N(\mathbf{p}+\mathbf{s})=\overline{\mathbf{x}}+\mathbf{1}=\mathbf{x}$. Moreover $N$ can be written as $N=I+L+L^t$ where $I$ is the identity matrix and $L$ is a lower triangular matrix with $0$ diagonal entries. Consequently,

\begin{align}
  \mathbf{x}\cdot \mathbf{p} & = N(\mathbf{p}+\mathbf{s})\cdot \mathbf{p} \nonumber \\
                                        & = (N(\mathbf{p}+\mathbf{s}))^t \mathbf{p} \nonumber \\
                                        & = \mathbf{p}^t N \mathbf{p} +(N\mathbf{s})^t \mathbf{p} \nonumber \\
                                        & = \mathbf{p}^t \mathbf{p} + \mathbf{p}^t L\mathbf{p} +\mathbf{p}^t L^t \mathbf{p}+\mathbf{1}^t \mathbf{p} \\     \nonumber
                                        & = \mathbf{p} \cdot \mathbf{p} + \mathbf{p} \cdot (L\mathbf{p}) +(L\mathbf{p}) \cdot\mathbf{p} +\mathbf{1} \cdot  \mathbf{p} \\    \nonumber
                                        & = (\mathbf{p}+\mathbf{1})\cdot \mathbf{p} \\ \nonumber
                                        & = \overline{\mathbf{p}}\cdot \mathbf{p} \\ \nonumber
                                         & = 0.
\end{align}
\end{proof}

\begin{lem}
\label{prop1}
A set $A$ is $AO$ ($NO$) if and only if $A$ is $\mathbf{x}_A$-$AO$ ($\mathbf{x}_A$-$NO$).
\end{lem}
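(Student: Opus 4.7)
The plan is to reduce both sides of the equivalence to computing a single dot product of $\mathbf{x}_A$ against a fixed solving pattern, and then to use Lemma \ref{lem2} to show that these two candidate dot products agree.

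First, I would observe that both $AO$ and $\mathbf{x}_A$-$AO$ presuppose that $A$ is solvable, so I may assume this from the start. Fix once and for all an odd dominating pattern $\mathbf{s}$ (so $N\mathbf{s}=\mathbf{1}$) and a solving pattern $\mathbf{p}_A$ for $\mathbf{x}_A$ (so $N\mathbf{p}_A=\mathbf{x}_A$). By $(O\,2)$, every solving pattern for $\mathbf{1}$ has the form $\mathbf{s}+\boldsymbol{\ell}$ and every solving pattern for $\mathbf{x}_A$ has the form $\mathbf{p}_A+\boldsymbol{\ell}$ as $\boldsymbol{\ell}$ ranges over null patterns. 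Since $A$ is solvable, $(O\,3)$ gives $\mathbf{x}_A\cdot\boldsymbol{\ell}=0$ for every null $\boldsymbol{\ell}$, so $\mathbf{x}_A\cdot\mathbf{p}$ takes a constant value across all solving patterns of any given solvable configuration. Consequently, $A$ is $AO$ iff $\mathbf{x}_A\cdot\mathbf{s}=1$, and $A$ is $\mathbf{x}_A$-$AO$ iff $\mathbf{x}_A\cdot\mathbf{p}_A=1$; the $NO$ / $\mathbf{x}_A$-$NO$ cases correspond to these values both being $0$.

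The heart of the argument is therefore the identity $\mathbf{x}_A\cdot\mathbf{s}=\mathbf{x}_A\cdot\mathbf{p}_A$. Here I would use the fact that $\mathbf{s}+\mathbf{p}_A$ satisfies $N(\mathbf{s}+\mathbf{p}_A)=\mathbf{1}+\mathbf{x}_A=\overline{\mathbf{x}_A}$, so $\mathbf{s}+\mathbf{p}_A$ is a solving pattern for $\overline{\mathbf{x}_A}$. Applying Lemma \ref{lem2} with the solvable vector $\mathbf{x}_A$ and the solving pattern $\mathbf{s}+\mathbf{p}_A$ for $\overline{\mathbf{x}_A}$, I get
\[
0=\mathbf{x}_A\cdot(\mathbf{s}+\mathbf{p}_A)=\mathbf{x}_A\cdot\mathbf{s}+\mathbf{x}_A\cdot\mathbf{p}_A,
\]
which over $\mathbb{Z}_2$ is exactly $\mathbf{x}_A\cdot\mathbf{s}=\mathbf{x}_A\cdot\mathbf{p}_A$, completing the equivalence for both $AO$ and $NO$ simultaneously.

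The main obstacle, and the one clever step, is recognizing that Lemma \ref{lem2} is the right tool: it is not at all transparent from the definitions that the dot product of $\mathbf{x}_A$ against an odd dominating pattern equals its dot product against a pattern solving $\mathbf{x}_A$ itself. Once one notices that the sum of these two patterns solves $\overline{\mathbf{x}_A}$, Lemma \ref{lem2} does all the work. Everything else is bookkeeping from $(O\,2)$, $(O\,3)$, and Proposition \ref{prop2}.
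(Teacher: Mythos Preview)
Your proof is correct and follows essentially the same approach as the paper: both fix an odd dominating pattern $\mathbf{s}$ and a solving pattern for $\mathbf{x}_A$, observe that their sum solves $\overline{\mathbf{x}_A}$, and invoke Lemma~\ref{lem2} to conclude $\mathbf{x}_A\cdot\mathbf{s}=\mathbf{x}_A\cdot\mathbf{p}_A$. Your write-up is more explicit about why it suffices to check a single solving pattern (via $(O\,2)$ and $(O\,3)$), which the paper leaves implicit.
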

\begin{proof}
Let $\mathbf{s}$ be an odd dominating pattern and $\mathbf{r}$ be a solving pattern for $\mathbf{x}_A$. So $\mathbf{s}+\mathbf{r}$ solves $\mathbf{1}+\mathbf{x}_A=\overline{\mathbf{x}_A}$. By Lemma \ref{lem2}, $\mathbf{x}_A \cdot (\mathbf{s}+\mathbf{r})=0$. Hence, $\mathbf{x}_A \cdot \mathbf{s}= \mathbf{x}_A \cdot \mathbf{r}$.
\end{proof}

\begin{lem}
\label{lem3}
Let $G$ be a graph; $A_1$, $A_2$ be two disjoint solvable subsets of $V(G)$. Then the followings hold.

i) If $A_1$ and $A_2$ are both $AO$ or both $NO$, then $A_1$ is $\overline{\mathbf{x}_{A_2}}$-$NO$ if and only if $A_2$ is $\overline{\mathbf{x}_{A_1}}$-$NO$.

ii) If $A_1$ is $AO$ and $A_2$ is $NO$, then $A_1$ is $\overline{\mathbf{x}_{A_2}}$-$NO$ if and only if $A_2$ is $\overline{\mathbf{x}_{A_1}}$-$AO$.
\end{lem}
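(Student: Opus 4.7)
The plan is to reduce each of the four statements of the form ``$A_j$ is $\overline{\mathbf{x}_{A_i}}$-$XO$'' to a single inner-product equation, and then to exploit the symmetry of the closed neighborhood matrix $N$. Fix an odd dominating pattern $\mathbf{s}$ and, for $i=1,2$, fix a solving pattern $\mathbf{r}_i$ of $\mathbf{x}_{A_i}$. Then $\mathbf{s}+\mathbf{r}_i$ solves $\mathbf{1}+\mathbf{x}_{A_i}=\overline{\mathbf{x}_{A_i}}$. Since $A_j$ is solvable, $\mathbf{x}_{A_j}$ is orthogonal to every null pattern by (\emph{O 3}), so $\mathbf{x}_{A_j}\cdot \mathbf{p}$ is the same for every solving pattern $\mathbf{p}$ of a given configuration. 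Consequently ``$A_j$ is $\overline{\mathbf{x}_{A_i}}$-$NO$'' (resp.\ ``$A_j$ is $\overline{\mathbf{x}_{A_i}}$-$AO$'') is equivalent to the single equation $\mathbf{x}_{A_j}\cdot(\mathbf{s}+\mathbf{r}_i)=0$ (resp.\ $=1$).

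By Lemma \ref{prop1}, $\mathbf{x}_{A_j}\cdot \mathbf{s}=\mathbf{x}_{A_j}\cdot \mathbf{r}_j$, which equals $1$ if $A_j$ is $AO$ and $0$ if $A_j$ is $NO$. Thus each of the four conditions becomes a statement about the single quantity $\mathbf{x}_{A_j}\cdot \mathbf{r}_i$ for $i\ne j$. The crucial ingredient is the symmetric identity $\mathbf{x}_{A_1}\cdot \mathbf{r}_2=\mathbf{x}_{A_2}\cdot \mathbf{r}_1$, which follows from $N=N^t$ together with $N\mathbf{r}_i=\mathbf{x}_{A_i}$ via
\[
\mathbf{x}_{A_1}\cdot \mathbf{r}_2 \;=\; (N\mathbf{r}_1)^t\mathbf{r}_2 \;=\; \mathbf{r}_1^t N\mathbf{r}_2 \;=\; \mathbf{x}_{A_2}\cdot \mathbf{r}_1.
\]

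The case analysis is then routine. In (i), letting $\alpha\in\{0,1\}$ denote the common value of $\mathbf{x}_{A_1}\cdot \mathbf{s}$ and $\mathbf{x}_{A_2}\cdot \mathbf{s}$, both sides of the equivalence reduce to the equation $\mathbf{x}_{A_1}\cdot \mathbf{r}_2=\alpha$ thanks to the symmetric identity. In (ii), ``$A_1$ is $\overline{\mathbf{x}_{A_2}}$-$NO$'' becomes $\mathbf{x}_{A_1}\cdot \mathbf{r}_2=1$ (since $\mathbf{x}_{A_1}\cdot \mathbf{s}=1$), while ``$A_2$ is $\overline{\mathbf{x}_{A_1}}$-$AO$'' becomes $\mathbf{x}_{A_2}\cdot \mathbf{r}_1=1$ (since $\mathbf{x}_{A_2}\cdot \mathbf{s}=0$); the symmetric identity identifies these. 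I do not foresee a genuine obstacle, since all the needed ingredients are already in hand. The one subtlety is that the definitions of $\mathbf{c}$-$AO$ and $\mathbf{c}$-$NO$ quantify over \emph{all} solving patterns of $\mathbf{c}$, so one must justify that the single representative $\mathbf{s}+\mathbf{r}_i$ suffices—this is exactly what the orthogonality remark in the first paragraph accomplishes. Note that disjointness of $A_1$ and $A_2$ plays no role in the argument itself; presumably it appears in the hypothesis because the lemma is meant to feed into the edge-addition/removal propositions that follow.
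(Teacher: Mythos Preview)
Your proof is correct and, in fact, slightly slicker than the paper's. The key difference is the source of the symmetric relation between the two ``cross'' inner products. You obtain it in one line from the symmetry of $N$:
\[
\mathbf{x}_{A_1}\cdot \mathbf{r}_2=(N\mathbf{r}_1)^t\mathbf{r}_2=\mathbf{r}_1^tN\mathbf{r}_2=\mathbf{x}_{A_2}\cdot \mathbf{r}_1.
\]
The paper instead works with solving patterns $\mathbf{p}_1,\mathbf{p}_2$ for $\overline{\mathbf{x}_{A_1}},\overline{\mathbf{x}_{A_2}}$, observes that $\mathbf{p}_1+\mathbf{p}_2$ solves $\mathbf{x}_{A_1\cup A_2}$, and then applies Lemma~\ref{prop1} to $A_1\cup A_2$ (which is $NO$ in case (i) and $AO$ in case (ii)) together with Lemma~\ref{lem2} to deduce $\mathbf{x}_{A_1}\cdot\mathbf{p}_2=\mathbf{x}_{A_2}\cdot\mathbf{p}_1$ (case (i)) or $\mathbf{x}_{A_1}\cdot\mathbf{p}_2+\mathbf{x}_{A_2}\cdot\mathbf{p}_1=1$ (case (ii)). Thus the paper's argument genuinely \emph{uses} the disjointness of $A_1$ and $A_2$ (to write $\mathbf{x}_{A_1\cup A_2}=\mathbf{x}_{A_1}+\mathbf{x}_{A_2}$ and to determine the activation type of the union), whereas your approach does not---as you correctly note. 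Your route is shorter and yields a mildly stronger statement; the paper's route has the virtue of staying within the $AO$/$NO$ calculus already set up, without appealing to $N=N^t$ directly.
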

\begin{proof}
Let $\mathbf{p_1}$, $\mathbf{p_2}$ be solving patterns for $\overline{\mathbf{x}_{A_1}} $, $\overline{\mathbf{x}_{A_2}}$, respectively. Let us first assume that $A_1$ and $A_2$ are both $AO$ or both $NO$. Then $A_1\cup A_2$ is $NO$ since $A_1$ and $A_2$ are disjoint. By Lemma \ref{prop1} it is $\mathbf{x}_{A_1\cup A_2}$-$NO$. On the other hand, $N(\mathbf{p_1}+\mathbf{p_2})= \overline{\mathbf{x}_{A_1}}+\overline{\mathbf{x}_{A_2}}=\mathbf{x}_{A_1}+\mathbf{1}+\mathbf{x}_{A_1}+\mathbf{1}=\mathbf{x}_{A_1\cup A_2}$. Hence, $\mathbf{x}_{A_1\cup A_2}\cdot (\mathbf{p_1}+\mathbf{p_2})=0$. Explicitly  this means $\mathbf{x}_{A_1} \cdot \mathbf{p_1} + \mathbf{x}_{A_1} \cdot \mathbf{p_2} + \mathbf{x}_{A_2} \cdot \mathbf{p_1} + \mathbf{x}_{A_2} \cdot \mathbf{p_2}=0$. By Lemma \ref{lem2}, $\mathbf{x}_{A_1} \cdot \mathbf{p_1} = \mathbf{x}_{A_2} \cdot \mathbf{p_2} =0$. So  $\mathbf{x}_{A_1} \cdot \mathbf{p_2} + \mathbf{x}_{A_2} \cdot \mathbf{p_1}=0$, which implies $\mathbf{x}_{A_1} \cdot \mathbf{p_2} =\mathbf{x}_{A_2} \cdot \mathbf{p_1}$.

In the second case where $A_1$ is $AO$ and $A_2$ is $NO$, the proof follows the same lines as in the above case but this time  $A_1\cup A_2$ is $AO$ which leads us $\mathbf{x}_{A_1} \cdot \mathbf{p_2} + \mathbf{x}_{A_2} \cdot \mathbf{p_1}=1$. Hence, $\mathbf{x}_{A_2}\cdot \mathbf{p_1}=1-\mathbf{x}_{A_1}\cdot \mathbf{p_2}$.
\end{proof}

Let $G$ be a graph, $A_1$, $A_2$ be two disjoint subsets of $V(G)$. We define the graph $G^*$ as the resulting graph of the following operation. For every pair of vertices $u\in A_1$ and $v\in A_2$ we add the edge $uv$ to $G$ if $u$ and $v$ are not adjacent and we remove the edge $uv$ from $G$ if $u$ and $v$ are adjacent. Let $N$, $N^*$ be the closed neighborhood matrices of $G$, $G^*$, respectively.
Let $V(G)=\{v_1,...,v_n\}$. Realize that the relation between the closed neighborhood matrices $N$ and $N^*$ is given by $N^*=N+J$ where $J$ is the matrix with entries

\begin{align}
(J)_{ij}= \left\{ \begin{array}{cc}
                1 &  \;\;\text{if}\;\;\;\; v_i\in A_1,\; v_j\in A_2\; or \;v_j\in A_1,\; v_i\in A_2      \\
                0 &  \;\;\text{otherwise}  \\
                \end{array} \right\},
\end{align}
This implies that for any pattern $\mathbf{p}$  we have
\begin{equation}
\label{eqn1}
N^*\mathbf{p}=N\mathbf{p}+(\mathbf{x}_{A_1}\cdot \mathbf{p})\mathbf{x}_{A_2}+ (\mathbf{x}_{A_2}\cdot \mathbf{p})\mathbf{x}_{A_1}.
\end{equation}

 Let $\Delta\nu=\nu(G^*)-\nu(G)$. In the following propositions we state solving patterns for which configurations in $G$ correspond to an odd dominating pattern of $G^*$ and how $\Delta \nu$ and activation types of $A_1$ and $A_2$ change under different cases.

 In the proofs of the following propositions we reserve the notations $\mathbf{p_1}$ and $\mathbf{p_2}$ for arbitrary solving patterns for $\overline{\mathbf{x}_{A_1}} $, $\overline{\mathbf{x}_{A_2}}$, respectively; and we reserve the notations $\mathbf{r_1}$ and $\mathbf{r_2}$ for arbitrary solving patterns for $\mathbf{x}_{A_1} $, $\mathbf{x}_{A_2}$, respectively.


\begin{prop}
\label{NN0}
Let $A_1$ and $A_2$ be $NO$ sets in $G$. Also assume that $A_2$ is $\overline{\mathbf{x}_{A_1}}$-$NO$ in $G$. Then for any pattern $\mathbf{p}$, $N^*\mathbf{p}=\mathbf{1}\; \text{if and only if}\; N\mathbf{p}=\mathbf{1}.$ Moreover,
$A_1$ and $A_2$ are $NO$ in $G^*$ and $\Delta \nu =0$.
\end{prop}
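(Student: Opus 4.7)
The plan is to read formula \eqref{eqn1} as a dictionary translating $N^*\mathbf{p}=\mathbf{1}$ into a condition on $N\mathbf{p}$ together with the two scalars $a:=\mathbf{x}_{A_1}\cdot\mathbf{p}$ and $b:=\mathbf{x}_{A_2}\cdot\mathbf{p}$. The ``if'' direction is immediate: given $N\mathbf{p}=\mathbf{1}$, the pattern $\mathbf{p}$ is odd dominating in $G$, so the $NO$ hypotheses on $A_1$ and $A_2$ force $a=b=0$, and \eqref{eqn1} yields $N^*\mathbf{p}=N\mathbf{p}=\mathbf{1}$.

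For the ``only if'' direction, suppose $N^*\mathbf{p}=\mathbf{1}$; then \eqref{eqn1} rearranges to $N\mathbf{p}=\mathbf{1}+a\mathbf{x}_{A_2}+b\mathbf{x}_{A_1}$, and the goal is to rule out the three remaining cases $(a,b)\neq(0,0)$. If $(a,b)=(1,0)$, then $\mathbf{p}$ solves $\overline{\mathbf{x}_{A_2}}$; since $A_1,A_2$ are both $NO$ and $A_2$ is $\overline{\mathbf{x}_{A_1}}$-$NO$, Lemma \ref{lem3}(i) upgrades this to $A_1$ being $\overline{\mathbf{x}_{A_2}}$-$NO$, forcing $a=0$, a contradiction. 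Case $(0,1)$ is an immediate application of the hypothesis that $A_2$ is $\overline{\mathbf{x}_{A_1}}$-$NO$. In case $(1,1)$, disjointness of $A_1,A_2$ gives $\mathbf{x}_{A_1}+\mathbf{x}_{A_2}=\mathbf{x}_{A_1\cup A_2}$, so $\mathbf{p}$ solves $\overline{\mathbf{x}_{A_1\cup A_2}}$; here I would exhibit the explicit solving pattern $\mathbf{q}=\mathbf{p_1}+\mathbf{p_2}+\mathbf{s}$ of $\overline{\mathbf{x}_{A_1\cup A_2}}$ and compute $\mathbf{x}_{A_1}\cdot\mathbf{q}=0$ termwise: Lemma \ref{lem2} kills the first summand, Lemma \ref{lem3}(i) the second, and the $NO$ hypothesis on $A_1$ the third. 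By (\emph{O 2}) every other solving pattern of $\overline{\mathbf{x}_{A_1\cup A_2}}$ differs from $\mathbf{q}$ by a null pattern, and (\emph{O 3}) combined with the solvability of $A_1$ gives that $\mathbf{x}_{A_1}$ is orthogonal to every null pattern, so $a=\mathbf{x}_{A_1}\cdot\mathbf{p}=0$, again a contradiction.

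The two moreover clauses drop out quickly. The biconditional shows that $G$ and $G^*$ share exactly the same odd dominating patterns, so $A_1$ and $A_2$ remain orthogonal to all of them and are therefore $NO$ in $G^*$. By (\emph{O 2}) the number of solving patterns of $\mathbf{1}$ is $2^{\nu(G)}$ in $G$ and $2^{\nu(G^*)}$ in $G^*$; since the biconditional equates these two sets, $\nu(G)=\nu(G^*)$ and $\Delta\nu=0$. The main obstacle is case $(1,1)$: no hypothesis applies directly, and resolving it requires assembling the explicit solving pattern, verifying three separate orthogonality relations, and then propagating the computation across the entire null-pattern coset; once that case is handled, the other cases of the forward direction and both moreover clauses follow mechanically from \eqref{eqn1}.
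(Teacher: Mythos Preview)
Your proof is correct and follows the same overall architecture as the paper: the same case split on $(a,b)$, the same handling of the converse direction and the moreover clauses, and the same appeals to Lemma~\ref{lem3}(i) for the $(1,0)$ case and the hypothesis for $(0,1)$.

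The only noteworthy difference is case $(1,1)$. The paper decomposes $\mathbf{p}=\mathbf{p_1}+\mathbf{r_2}$ (where $\mathbf{r_2}$ solves $\mathbf{x}_{A_2}$) and computes $\mathbf{x}_{A_2}\cdot\mathbf{p}=\mathbf{x}_{A_2}\cdot\mathbf{p_1}+\mathbf{x}_{A_2}\cdot\mathbf{r_2}$, dispatching the two terms via the $\overline{\mathbf{x}_{A_1}}$-$NO$ hypothesis and Lemma~\ref{prop1} respectively, to contradict $b=1$. You instead build an auxiliary solving pattern $\mathbf{q}=\mathbf{p_1}+\mathbf{p_2}+\mathbf{s}$, compute $\mathbf{x}_{A_1}\cdot\mathbf{q}=0$ term by term, and then propagate this across the null-pattern coset via (\emph{O 2})--(\emph{O 3}) to contradict $a=1$. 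Both routes are sound; the paper's is a bit shorter because Lemma~\ref{prop1} packages your coset argument, while yours avoids invoking Lemma~\ref{prop1} at the cost of an extra step.
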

\begin{proof}
Let $N^*\mathbf{p}=\mathbf{1}$. First, assume that $\mathbf{x}_{A_1}\cdot \mathbf{p}=0 \;\text{and}\; \mathbf{x}_{A_2}\cdot \mathbf{p}=1$.
Then, by \eqref{eqn1}, $N\mathbf{p}=\overline{\mathbf{x}_{A_1}}.$ By the assumption of the proposition $\mathbf{x}_{A_2}\cdot \mathbf{p}=0$, which is a contradiction.

Second, assume that $\mathbf{x}_{A_1}\cdot \mathbf{p}=1 \;\text{and}\; \mathbf{x}_{A_2}\cdot \mathbf{p}=0.$
Then, by \eqref{eqn1}, $N\mathbf{p}=\overline{\mathbf{x}_{A_2}}.$ By Lemma \ref{lem3}, $A_2$ is $\overline{\mathbf{x}_{A_1}}$-$NO$ is equivalent to say $A_1$ is $\overline{\mathbf{x}_{A_2}}$-$NO$. Hence $\mathbf{x}_{A_1}\cdot \mathbf{p}=0$, which is a contradiction.

Third assume that $\mathbf{x}_{A_1}\cdot \mathbf{p}=1 \;\text{and}\; \mathbf{x}_{A_2}\cdot \mathbf{p}=1.$
Then, by \eqref{eqn1}, $N\mathbf{p}=\overline{\mathbf{x}_{A_1\cup A_2}}=\overline{\mathbf{x}_{A_1}} + \mathbf{x}_{A_2}$ , which means $\mathbf{p}=\mathbf{p_1}+\mathbf{r_2}$.
\begin{equation}
\label{eqn14}
1=\mathbf{x}_{A_2}\cdot \mathbf{p}=\mathbf{x}_{A_2}\cdot \mathbf{p_1}+\mathbf{x}_{A_2}\cdot \mathbf{r_2}=\mathbf{x}_{A_2}\cdot \mathbf{r_2}
\end{equation}
by the assumption.
On the other hand, $A_2$ is $NO$ implies  $A_2$ is $\mathbf{x}_{A_2}$-$NO$ by Lemma \ref{prop1}. Hence $\mathbf{x}_{A_2}\cdot \mathbf{r_2}=0$, which contradicts with \eqref{eqn14}.

Consequently, the only possibility is that $\mathbf{x}_{A_1}\cdot \mathbf{p}=0 \;\text{and}\; \mathbf{x}_{A_2}\cdot \mathbf{p}=0,$
which, by \eqref{eqn1}, gives $N\mathbf{p}=\mathbf{1}$.

Conversely, let $N\mathbf{p}=\mathbf{1}$. Then $\mathbf{p}$ is an odd dominating pattern of $G$. Since $A_1$ and $A_2$ are $NO$ in $G$, this gives $\mathbf{x}_{A_1}\cdot \mathbf{p}=0 \;\text{and}\; \mathbf{x}_{A_2}\cdot \mathbf{p}=0$, which together with \eqref{eqn1}, implies $N^*\mathbf{p}=\mathbf{1}$.

Since the odd dominating patterns of $G$ and $G^*$ are identical, the activation types of sets does not change. Hence $A_1$ and $A_2$ are NO in $G^*$. Also the fact that the  number of odd dominating patterns of $G$ and $G^*$ are the same, implies $\nu(G)=\nu(G^*)$ by (\emph{O 2}). Hence $\Delta \nu =0$.

\end{proof}
\begin{prop}
\label{NN1}
Let $A_1$ and $A_2$ be $NO$ sets in $G$. Also assume that $A_2$ is $\overline{\mathbf{x}_{A_1}}$-$AO$ in $G$. Then for any pattern $\mathbf{p}$, $N^*\mathbf{p}=\mathbf{1}\; \text{if and only if}\; N\mathbf{p}=\mathbf{1} \; \text{or}\;  N\mathbf{p}=\overline{\mathbf{x}_{A_1}} \; \text{or}\;  N\mathbf{p}=\overline{\mathbf{x}_{A_2}} \; \text{or}\;  N\mathbf{p}=\overline{\mathbf{x}_{A_1\cup A_2}}  .$ Moreover, $A_1$ and $A_2$ are $HO$ in $G^*$, and $\Delta \nu =2$.
\end{prop}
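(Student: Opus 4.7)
The plan is to mirror the case analysis in the proof of Proposition \ref{NN0}. By \eqref{eqn1} we have
\[
N^*\mathbf{p} \;=\; N\mathbf{p} + a\,\mathbf{x}_{A_2} + b\,\mathbf{x}_{A_1},
\]
with $a:=\mathbf{x}_{A_1}\cdot\mathbf{p}$ and $b:=\mathbf{x}_{A_2}\cdot\mathbf{p}$. Imposing $N^*\mathbf{p}=\mathbf{1}$ and running through $(a,b)\in\{0,1\}^2$ forces $N\mathbf{p}$ to be one of $\mathbf{1}$, $\overline{\mathbf{x}_{A_1}}$, $\overline{\mathbf{x}_{A_2}}$, or $\overline{\mathbf{x}_{A_1\cup A_2}}$. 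The ``only if'' direction is immediate from this split; the real work is in the converse, where for each of the four candidate values of $N\mathbf{p}$ one must verify that the corresponding values of $a$ and $b$ do hold, so that \eqref{eqn1} delivers $\mathbf{1}$.

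These four verifications are the heart of the argument. If $N\mathbf{p}=\mathbf{1}$, the $NO$ hypothesis on $A_1,A_2$ gives $a=b=0$ directly. If $N\mathbf{p}=\overline{\mathbf{x}_{A_1}}$, then $\mathbf{p}=\mathbf{p_1}$: Lemma \ref{lem2} gives $a=0$, and the assumption ``$A_2$ is $\overline{\mathbf{x}_{A_1}}$-$AO$'' gives $b=1$. If $N\mathbf{p}=\overline{\mathbf{x}_{A_2}}$, then $\mathbf{p}=\mathbf{p_2}$: Lemma \ref{lem2} gives $b=0$, and Lemma \ref{lem3}(i), applicable because both $A_i$ are $NO$, upgrades the hypothesis to ``$A_1$ is $\overline{\mathbf{x}_{A_2}}$-$AO$'', forcing $a=1$. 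The fourth case is where the bookkeeping becomes the main obstacle: from $\overline{\mathbf{x}_{A_1\cup A_2}}=\overline{\mathbf{x}_{A_1}}+\mathbf{x}_{A_2}$, take $\mathbf{p}=\mathbf{p_1}+\mathbf{r_2}$ and split both dot products. For $b$, Lemma \ref{prop1} gives $\mathbf{x}_{A_2}\cdot\mathbf{r_2}=0$ and the $AO$ hypothesis supplies $\mathbf{x}_{A_2}\cdot\mathbf{p_1}=1$. For $a$, Lemma \ref{lem2} gives $\mathbf{x}_{A_1}\cdot\mathbf{p_1}=0$, while $\mathbf{p_2}=\mathbf{r_2}+\mathbf{s}$ for some odd dominating $\mathbf{s}$ with $\mathbf{x}_{A_1}\cdot\mathbf{s}=0$ (since $A_1$ is $NO$) gives $\mathbf{x}_{A_1}\cdot\mathbf{r_2}=\mathbf{x}_{A_1}\cdot\mathbf{p_2}=1$ by Lemma \ref{lem3}(i). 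Hence $a=b=1$.

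With the biconditional in hand, the solving patterns for $\mathbf{1}$ in $G^*$ decompose into four cosets of $Ker(N)$ (with representatives $\mathbf{s}$, $\mathbf{p_1}$, $\mathbf{p_2}$, $\mathbf{p_1}+\mathbf{r_2}$), so $2^{\nu(G^*)}=4\cdot 2^{\nu(G)}$ by (\emph{O 2}), yielding $\Delta\nu=2$. Finally, to show $A_1$ and $A_2$ are $HO$ in $G^*$, I would exhibit null patterns of $N^*$ violating orthogonality to $\mathbf{x}_{A_i}$: a short computation via \eqref{eqn1} gives $N^*\mathbf{r_2}=\mathbf{x}_{A_2}+1\cdot\mathbf{x}_{A_2}+0\cdot\mathbf{x}_{A_1}=\mathbf{0}$, so $\mathbf{r_2}\in Ker(N^*)$, and we already observed $\mathbf{x}_{A_1}\cdot\mathbf{r_2}=1$; symmetrically $\mathbf{r_1}\in Ker(N^*)$ with $\mathbf{x}_{A_2}\cdot\mathbf{r_1}=1$. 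By (\emph{O 3}) neither $A_1$ nor $A_2$ is solvable in $G^*$, so both are $HO$.
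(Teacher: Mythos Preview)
Your proof is correct and follows essentially the same four-case analysis as the paper's own argument; the treatment of the forward direction and of the first three converse cases is identical, and your handling of the fourth case (using $\mathbf{p_2}=\mathbf{r_2}+\mathbf{s}$ to extract $\mathbf{x}_{A_1}\cdot\mathbf{r_2}$) is a minor repackaging of the paper's use of the second decomposition $\mathbf{p}=\mathbf{p_2}+\mathbf{r_1}$. The one genuine variation is your endgame for the $HO$ conclusion: the paper simply tallies that $\mathbf{x}_{A_i}\cdot\mathbf{p}$ equals $0$ on two of the four classes and $1$ on the other two, whereas you produce $\mathbf{r_1},\mathbf{r_2}\in Ker(N^*)$ explicitly and invoke (\emph{O 3}) --- a slightly cleaner finish that has the bonus of exhibiting the two new null patterns directly.
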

\begin{proof}
Let $N^*\mathbf{p}=\mathbf{1}$. Then, by \eqref{eqn1}, we have
$$N\mathbf{p}=\mathbf{1}+(\mathbf{x}_{A_1}\cdot \mathbf{p})\mathbf{x}_{A_2}+ (\mathbf{x}_{A_2}\cdot \mathbf{p})\mathbf{x}_{A_1}.$$
Hence, depending on the the value of $\mathbf{x}_{A_1}\cdot \mathbf{p}$ and $\mathbf{x}_{A_2}\cdot \mathbf{p}$, one of the four cases $N\mathbf{p}=1,\;  N\mathbf{p}=\overline{\mathbf{x}_{A_1}},\;  N\mathbf{p}=\overline{\mathbf{x}_{A_2}}, \; \text{or}\;  N\mathbf{p}=\overline{\mathbf{x}_{A_1\cup A_2}}$ must hold.

Conversely, let us first assume that $N\mathbf{p}=\mathbf{1}$. Then by assumption $\mathbf{x}_{A_1}\cdot \mathbf{p}=\mathbf{x}_{A_2}\cdot \mathbf{p}=0$. By \eqref{eqn1}, $N^*\mathbf{p}=\mathbf{1}$.

Second, let $N \mathbf{p}= \overline{\mathbf{x}_{A_1}}.$ Then by assumption, $\mathbf{x}_{A_2}\cdot \mathbf{p}=1$. Moreover, by Lemma \ref{lem2}, $\mathbf{x}_{A_1}\cdot \mathbf{p}=0$. Thus, by \eqref{eqn1}, $N^* \mathbf{p}= \overline{\mathbf{x}_{A_1}}+\mathbf{x}_{A_1}=\mathbf{1}$.

Third, let $N \mathbf{p}= \overline{\mathbf{x}_{A_2}}.$ By assumption and Lemma \ref{lem3}, $\mathbf{x}_{A_1}\cdot \mathbf{p}=1$. Moreover, by Lemma \ref{lem2}, $\mathbf{x}_{A_2}\cdot \mathbf{p}=0$. Then, by \eqref{eqn1}, $N^* \mathbf{p}= \overline{\mathbf{x}_{A_2}}+\mathbf{x}_{A_2}=\mathbf{1}$.

Lastly, let $N\mathbf{p}=\overline{\mathbf{x}_{A_1\cup A_2}}= \overline{\mathbf{x}_{A_1}}+\mathbf{x}_{A_2}=\mathbf{x}_{A_1}+\overline{\mathbf{x}_{A_2}}.$ Hence $\mathbf{p}=\mathbf{p_1}+\mathbf{r_2}=\mathbf{p_2}+\mathbf{r_1}$. Note that by assumption $A_1$ and $A_2$ are $NO$. By Lemma \ref{prop1}, $A_1$ and $A_2$ are  $\mathbf{x}_{A_1}$-$NO$ and $\mathbf{x}_{A_2}$-$NO$, respectively. Hence, $\mathbf{x}_{A_1}\cdot \mathbf{r_1}=\mathbf{x}_{A_2}\cdot \mathbf{r_2}=0$. Moreover, by assumption and Lemma \ref{lem3}, $\mathbf{x}_{A_2}\cdot \mathbf{p_1}=\mathbf{x}_{A_1}\cdot \mathbf{p_2}=1$. Then, $\mathbf{x}_{A_1}\cdot \mathbf{p} =\mathbf{x}_{A_1}\cdot \mathbf{p_2}+ \mathbf{x}_{A_1}\cdot \mathbf{r_1}=1$, and $\mathbf{x}_{A_2}\cdot \mathbf{p} =\mathbf{x}_{A_2}\cdot \mathbf{p_1}+ \mathbf{x}_{A_2}\cdot \mathbf{r_2}=1$. Consequently, by \eqref{eqn1}, $N^* \mathbf{p}= \overline{\mathbf{x}_{A_1\cup A_2}}+\overline{\mathbf{x}_{A_1}}+\overline{\mathbf{x}_{ A_2}}=\mathbf{1}.$

Note that there exist $2^{\nu(G)}$ patterns $\mathbf{p}$ for  each case $N\mathbf{p}=\mathbf{1},\;  N\mathbf{p}=\overline{\mathbf{x}_{A_1}},\;  N\mathbf{p}=\overline{\mathbf{x}_{A_2}},\; \text{and}\;  N\mathbf{p}=\overline{\mathbf{x}_{A_1\cup A_2}}$. So there exist total of $2^{\nu(G)+2}$ patterns satisfying one of the four cases. On the other hand there exist $2^{\nu(G^*)}$ odd dominating patterns of $G^*$. Hence, $\nu(G^*)=\nu(G)+2$, which implies $\Delta \nu =2$. Moreover, $\mathbf{x}_{A_1}\cdot \mathbf{p}$ is zero in the first two cases and one in the last two cases, which makes $A_1$ a $HO$ set in $G^*$. Similarly, $\mathbf{x}_{A_2}\cdot \mathbf{p}$ is zero in the first and third cases and one in the second and fourth cases. Hence, $A_2$ is $HO$ in $G^*$ as well.

\end{proof}

\begin{prop}
\label{NA0}
Let $A_1$ be a $NO$ and $A_2$ be a $AO$ set in $G$. Also assume that $A_2$ is $\overline{\mathbf{x}_{A_1}}$-$NO$ in $G$.
Then for any pattern $\mathbf{p}$, $N^*\mathbf{p}=\mathbf{1}\; \text{if and only if}\;  N\mathbf{p}=\overline{\mathbf{x}_{A_2}} \; \text{or}\;  N\mathbf{p}=\overline{\mathbf{x}_{A_1\cup A_2}}  .$ Moreover, $A_1$ is a $AO$ and $A_2$ is a $HO$ set in $G^*$, and $\Delta \nu =1$.
\end{prop}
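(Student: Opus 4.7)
The plan is to mimic the case-analysis structure used in Propositions \ref{NN0} and \ref{NN1}, with the hypotheses on $A_1$, $A_2$ tuned so that exactly \emph{two} of the four parity cases survive (rather than one or all four). Throughout I would abbreviate $a_1:=\mathbf{x}_{A_1}\cdot\mathbf{p}$ and $a_2:=\mathbf{x}_{A_2}\cdot\mathbf{p}$, and rely on the identity \eqref{eqn1}, which says $N\mathbf{p}=N^*\mathbf{p}+a_1\mathbf{x}_{A_2}+a_2\mathbf{x}_{A_1}$ over $\mathbb{Z}_2$.

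For the forward direction, assume $N^*\mathbf{p}=\mathbf{1}$. First I would rule out $(a_1,a_2)=(0,0)$: then $N\mathbf{p}=\mathbf{1}$, so $\mathbf{p}$ is an odd dominating pattern of $G$, but $A_2$ is $AO$ forces $a_2=1$, contradiction. Next, $(a_1,a_2)=(0,1)$ gives $N\mathbf{p}=\overline{\mathbf{x}_{A_1}}$; the hypothesis that $A_2$ is $\overline{\mathbf{x}_{A_1}}$-$NO$ forces $a_2=0$, contradiction. Thus only $(1,0)$ and $(1,1)$ can occur, producing $N\mathbf{p}=\overline{\mathbf{x}_{A_2}}$ and $N\mathbf{p}=\overline{\mathbf{x}_{A_1\cup A_2}}$ respectively.

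For the converse, I would use Lemma \ref{lem3} ii) with the roles of $A_1,A_2$ swapped (permitted since its statement is symmetric in the two sets): since $A_2$ is $AO$, $A_1$ is $NO$, and $A_2$ is $\overline{\mathbf{x}_{A_1}}$-$NO$, it follows that $A_1$ is $\overline{\mathbf{x}_{A_2}}$-$AO$. This is the key bridge that pins down $a_1$ in the two surviving cases. If $N\mathbf{p}=\overline{\mathbf{x}_{A_2}}$, then $a_1=1$ by what we just obtained, and $a_2=0$ by Lemma \ref{lem2}; plugging back into \eqref{eqn1} yields $N^*\mathbf{p}=\mathbf{1}$. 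If $N\mathbf{p}=\overline{\mathbf{x}_{A_1\cup A_2}}$, write $\mathbf{p}=\mathbf{p_1}+\mathbf{r_2}$; then $a_2=\mathbf{x}_{A_2}\cdot\mathbf{p_1}+\mathbf{x}_{A_2}\cdot\mathbf{r_2}$, and the two terms equal $0$ (by the $\overline{\mathbf{x}_{A_1}}$-$NO$ hypothesis) and $1$ (by Lemma \ref{prop1} applied to the $AO$ set $A_2$), so $a_2=1$; similarly decomposing $\mathbf{p}=\mathbf{p_2}+\mathbf{r_1}$ with Lemma \ref{lem2} and the $NO$ property of $A_1$ gives $a_1=1$. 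Again \eqref{eqn1} collapses to $N^*\mathbf{p}=\mathbf{1}$.

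Finally, I would count: each of the two equations $N\mathbf{p}=\overline{\mathbf{x}_{A_2}}$ and $N\mathbf{p}=\overline{\mathbf{x}_{A_1\cup A_2}}$ has exactly $2^{\nu(G)}$ solutions, so $G^*$ has $2^{\nu(G)+1}$ odd dominating patterns, giving $\Delta\nu=1$ via (\emph{O 2}). Reading off the parities of $a_1,a_2$ across the two surviving cases, $a_1=1$ in both (so $A_1$ is $AO$ in $G^*$), while $a_2$ takes the value $0$ on one family and $1$ on the other, so $A_2$ is $HO$ in $G^*$. The main subtlety will be the Case $(1,1)$ verification, where one must juggle two different decompositions of $\mathbf{p}$ together with Lemmas \ref{lem2}, \ref{prop1}, and \ref{lem3} to recover both inner products; beyond that the argument is a straightforward parallel to Proposition \ref{NN1}.
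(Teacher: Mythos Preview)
Your proposal is correct and follows essentially the same route as the paper's proof: the same two-case elimination in the forward direction, the same use of Lemma~\ref{lem3}(ii) to obtain that $A_1$ is $\overline{\mathbf{x}_{A_2}}$-$AO$, the same double decomposition $\mathbf{p}=\mathbf{p_1}+\mathbf{r_2}=\mathbf{p_2}+\mathbf{r_1}$ in the $(1,1)$ case, and the same counting argument. One small slip: in the $(1,1)$ case, when you compute $a_1$ from $\mathbf{p}=\mathbf{p_2}+\mathbf{r_1}$, the ingredient giving $\mathbf{x}_{A_1}\cdot\mathbf{p_2}=1$ is the Lemma~\ref{lem3} consequence you already derived (that $A_1$ is $\overline{\mathbf{x}_{A_2}}$-$AO$), not Lemma~\ref{lem2}; together with $\mathbf{x}_{A_1}\cdot\mathbf{r_1}=0$ from Lemma~\ref{prop1} this yields $a_1=1$, exactly as the paper does.
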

\begin{proof}
Let $N^*\mathbf{p}=\mathbf{1}$. Assume first that $\mathbf{x}_{A_1}\cdot \mathbf{p}=\mathbf{x}_{A_2}\cdot \mathbf{p}=0$. Then, by \eqref{eqn1}, we have $N \mathbf{p}= \mathbf{1}$. But then $\mathbf{x}_{A_2}\cdot \mathbf{p}=1$ since $A_2$ is an $AO$ set in $G$, which is a contradiction.

Assume that $\mathbf{x}_{A_1}\cdot \mathbf{p}=0$, $\mathbf{x}_{A_2}\cdot \mathbf{p}=1$. By \eqref{eqn1}, $N \mathbf{p}= \overline{\mathbf{x}_{A_1}}$. Then $\mathbf{x}_{A_2}\cdot \mathbf{p}=0$ by the assumption of the proposition, which is a contradiction. Hence $N\mathbf{p}=\overline{\mathbf{x}_{A_2}} \; \text{or}\;  N\mathbf{p}=\overline{\mathbf{x}_{A_1\cup A_2}}  .$

Conversely, assume $N\mathbf{p}=\overline{\mathbf{x}_{A_2}}$. Then $\mathbf{x}_{A_1}\cdot \mathbf{p}=1$ by the assumption of the proposition and Lemma \ref{lem3}. Moreover, $\mathbf{x}_{A_2}\cdot \mathbf{p}=0$ by Lemma \ref{lem2}. Hence, by \eqref{eqn1}, $N^*\mathbf{p}=\mathbf{1}$.

Assume  $N\mathbf{p}=\overline{\mathbf{x}_{A_1\cup A_2}} = \overline{\mathbf{x}_{A_1}}+\mathbf{x}_{A_2}=\overline{\mathbf{x}_{A_2}}+\mathbf{x}_{A_1}$. Then $\mathbf{p}=\mathbf{p_1}+\mathbf{r_2}= \mathbf{p_2}+\mathbf{r_1}$. Moreover, $\mathbf{x}_{A_1}\cdot \mathbf{p_2}=1$ by the assumption of the proposition and Lemma \ref{lem3}. In view of Lemma \ref{prop1}, $\mathbf{x}_{A_1}\cdot \mathbf{r_1}=0$ since $A_1$ is a $NO$ set in $G$, and $\mathbf{x}_{A_2}\cdot \mathbf{r_2}=1$ since $A_2$ is a $AO$ set in $G$. Therefore, $\mathbf{x}_{A_1}\cdot \mathbf{p}=\mathbf{x}_{A_1}\cdot \mathbf{p_2}+\mathbf{x}_{A_1}\cdot \mathbf{r_1}=1$, and $\mathbf{x}_{A_2}\cdot \mathbf{p}=\mathbf{x}_{A_2}\cdot \mathbf{p_1}+\mathbf{x}_{A_2}\cdot \mathbf{r_2}=1$. Then,  by \eqref{eqn1}, $N^*\mathbf{p}=\mathbf{1}$.

There exist $2 .2^{\nu(G)}$ patterns satisfying $N\mathbf{p}=\overline{\mathbf{x}_{A_2}} \; \text{or}\;  N\mathbf{p}=\overline{\mathbf{x}_{A_1\cup A_2}},$ and there are $2^{\nu(G^*)}$ odd dominating patterns of $G^*$. Hence $\nu(G^*)=\nu(G)+1$, which gives $\Delta\nu=1$. $\mathbf{x}_{A_1}\cdot \mathbf{p}=1$ in both cases $N\mathbf{p}=\overline{\mathbf{x}_{A_2}} \; \text{and}\;  N\mathbf{p}=\overline{\mathbf{x}_{A_1\cup A_2}},$ which makes
$A_1$ an $AO$ set in $G^*$. On the other hand, $\mathbf{x}_{A_2}\cdot \mathbf{p}$ is $0$ in the case $N\mathbf{p}=\overline{\mathbf{x}_{A_2}}$ and $1$ in the case $N\mathbf{p}=\overline{\mathbf{x}_{A_1\cup A_2}}$. Hence
$A_2$ is a $HO$ set in $G^*$.
\end{proof}

\begin{prop}
\label{NA1}
Let $A_1$ be a $NO$ and $A_2$ be an $AO$ set in $G$. Also assume that $A_2$ is $\overline{\mathbf{x}_{A_1}}$-$AO$ in $G$.
Then for any pattern $\mathbf{p}$, $N^*\mathbf{p}=\mathbf{1}\; \text{if and only if}\;  N\mathbf{p}=\overline{\mathbf{x}_{A_1}}.$ Moreover, $A_1$ is a $NO$ and $A_2$ is an $AO$ set in $G^*$, and $\Delta \nu =0$.
\end{prop}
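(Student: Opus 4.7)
The strategy mirrors the preceding propositions: given an arbitrary $\mathbf{p}$ with $N^*\mathbf{p}=\mathbf{1}$, use \eqref{eqn1} to split into four cases according to the two bits $\mathbf{x}_{A_1}\cdot\mathbf{p}$ and $\mathbf{x}_{A_2}\cdot\mathbf{p}$, and rule out all but one using the hypotheses and Lemmas \ref{lem2}, \ref{prop1}, \ref{lem3}. The one surviving case will force $N\mathbf{p}=\overline{\mathbf{x}_{A_1}}$. Then I will establish the converse by computing $N^*\mathbf{p}$ directly from \eqref{eqn1}, and finally read off the activation types and $\Delta\nu$ from the resulting bijection between odd dominating patterns of $G^*$ and solving patterns for $\overline{\mathbf{x}_{A_1}}$ in $G$.

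For the forward direction, \eqref{eqn1} gives $N\mathbf{p}=\mathbf{1}+(\mathbf{x}_{A_1}\cdot\mathbf{p})\mathbf{x}_{A_2}+(\mathbf{x}_{A_2}\cdot\mathbf{p})\mathbf{x}_{A_1}$. The case $(0,0)$ yields $N\mathbf{p}=\mathbf{1}$, contradicting that $A_2$ is $AO$ in $G$; the case $(1,0)$ yields $N\mathbf{p}=\overline{\mathbf{x}_{A_2}}$, but the contrapositive of Lemma \ref{lem3}(ii) (applied with the roles swapped so that the $AO$ set plays the role of $A_1$ in that lemma) converts the hypothesis ``$A_2$ is $\overline{\mathbf{x}_{A_1}}$-$AO$'' into ``$A_1$ is $\overline{\mathbf{x}_{A_2}}$-$NO$,'' forcing $\mathbf{x}_{A_1}\cdot\mathbf{p}=0$, a contradiction; the case $(1,1)$ gives $N\mathbf{p}=\overline{\mathbf{x}_{A_1\cup A_2}}$, and by writing $\mathbf{p}=\mathbf{p_2}+\mathbf{r_1}$ and using $\mathbf{x}_{A_1}\cdot\mathbf{p_2}=0$ (from the same $\overline{\mathbf{x}_{A_2}}$-$NO$ relation) together with $\mathbf{x}_{A_1}\cdot\mathbf{r_1}=0$ (from Lemma \ref{prop1} since $A_1$ is $NO$), I get $\mathbf{x}_{A_1}\cdot\mathbf{p}=0$, again a contradiction. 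So $(0,1)$ is the only possibility, and it gives exactly $N\mathbf{p}=\overline{\mathbf{x}_{A_1}}$.

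Conversely, if $N\mathbf{p}=\overline{\mathbf{x}_{A_1}}$, Lemma \ref{lem2} yields $\mathbf{x}_{A_1}\cdot\mathbf{p}=0$ and the hypothesis that $A_2$ is $\overline{\mathbf{x}_{A_1}}$-$AO$ yields $\mathbf{x}_{A_2}\cdot\mathbf{p}=1$, so \eqref{eqn1} gives $N^*\mathbf{p}=\overline{\mathbf{x}_{A_1}}+\mathbf{x}_{A_1}=\mathbf{1}$. Counting: odd dominating patterns of $G^*$ are in bijection with solving patterns of $\overline{\mathbf{x}_{A_1}}$ in $G$, of which there are $2^{\nu(G)}$; since there are $2^{\nu(G^*)}$ odd dominating patterns of $G^*$, this forces $\Delta\nu=0$. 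Every such $\mathbf{p}$ satisfies $\mathbf{x}_{A_1}\cdot\mathbf{p}=0$ and $\mathbf{x}_{A_2}\cdot\mathbf{p}=1$, so $A_1$ is $NO$ and $A_2$ is $AO$ in $G^*$.

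The only genuinely delicate step is the $(1,0)$ and $(1,1)$ eliminations, which both hinge on correctly transporting ``$A_2$ is $\overline{\mathbf{x}_{A_1}}$-$AO$'' through Lemma \ref{lem3}(ii). Since that lemma is stated with $A_1$ playing the $AO$ role, one must swap the labels before applying it, and then take the contrapositive (using that a solvable set is either $AO$ or $NO$ but not both); being careful with this swap is the main obstacle, but it is purely bookkeeping once spelled out.
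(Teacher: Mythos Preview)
Your proposal is correct and follows essentially the same approach as the paper's proof: case-split on the two bits via \eqref{eqn1}, eliminate three cases using the hypotheses and Lemmas \ref{lem2}, \ref{prop1}, \ref{lem3}, then verify the converse and count. The only cosmetic difference is in the $(1,1)$ elimination: the paper decomposes $\mathbf{p}=\mathbf{p_1}+\mathbf{r_2}$ and contradicts the bit $\mathbf{x}_{A_2}\cdot\mathbf{p}$ (using $\mathbf{x}_{A_2}\cdot\mathbf{p_1}=1$ and $\mathbf{x}_{A_2}\cdot\mathbf{r_2}=1$), whereas you decompose $\mathbf{p}=\mathbf{p_2}+\mathbf{r_1}$ and contradict $\mathbf{x}_{A_1}\cdot\mathbf{p}$; both are equally valid.
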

\begin{proof}
Let $N^*\mathbf{p}=\mathbf{1}$. First assume that $\mathbf{x}_{A_1}\cdot \mathbf{p}=0$ and  $\mathbf{x}_{A_2}\cdot \mathbf{p}=0$. Then, by \eqref{eqn1}, $N\mathbf{p}=\mathbf{1}$. This implies $\mathbf{x}_{A_2}\cdot \mathbf{p}=1$ by assumption, which is a contradiction.
\\
Second assume that $\mathbf{x}_{A_1}\cdot \mathbf{p}=1$ and  $\mathbf{x}_{A_2}\cdot \mathbf{p}=0$. Then, by \eqref{eqn1}, $N\mathbf{p}=\overline{\mathbf{x}_{A_2}}$. By assumption and Lemma \ref{lem3}, $\mathbf{x}_{A_1}\cdot \mathbf{p}=0,$ which is a contradiction.
\\
Third assume that  $\mathbf{x}_{A_1}\cdot \mathbf{p}=1$ and  $\mathbf{x}_{A_2}\cdot \mathbf{p}=1$. Then, by \eqref{eqn1}, $N\mathbf{p}=\overline{\mathbf{x}_{A_1}}+ \mathbf{x}_{A_2}$, which implies $\mathbf{p}=\mathbf{p_1}+\mathbf{r_2}$. Because $A_2$ is an $AO$ set in $G$, $\mathbf{x}_{A_2}\cdot \mathbf{r_2}=1$ by Lemma \ref{prop1}. Thus, $\mathbf{x}_{A_2}\cdot \mathbf{p}= \mathbf{x}_{A_2}\cdot \mathbf{p_1}+ \mathbf{x}_{A_2}\cdot \mathbf{r_2}=0$, which is a contradiction.
\\
So the only possible case is  $\mathbf{x}_{A_1}\cdot \mathbf{p}=0$ and $\mathbf{x}_{A_2}\cdot \mathbf{p}=1,$ which by \eqref{eqn1}, gives $N\mathbf{p}=\overline{\mathbf{x}_{A_1}}$.

Conversely, if $N\mathbf{p}= \overline{\mathbf{x}_{A_1}}$, then $\mathbf{x}_{A_2}\cdot \mathbf{p}=1$ by assumption. Moreover, $\mathbf{x}_{A_1}\cdot \mathbf{p}=0$ by Lemma \ref{lem2}. Thus, by \eqref{eqn1}, $N^*\mathbf{p}=\overline{\mathbf{x}_{A_1}}+\mathbf{x}_{A_1}=\mathbf{1}$.

There exist $2^{\nu(G)}$ patterns satisfying $N\mathbf{p}=\overline{\mathbf{x}_{A_1}}$ and there are $2^{\nu(G^*)}$ odd dominating patterns of $G^*$. Hence $\nu(G^*)=\nu(G)$, which gives $\Delta\nu=0$. Moreover $\mathbf{x}_{A_1}\cdot \mathbf{p}=0$ and $\mathbf{x}_{A_2}\cdot \mathbf{p}=1$  in case of $N\mathbf{p}=\overline{\mathbf{x}_{A_1}}$. Hence $A_1$ is a $NO$ and $A_2$ is an $AO$ set in $G^*$.
\end{proof}

\begin{prop}
\label{AA0}
Let $A_1$ and $A_2$ be two $AO$ sets in $G$. Also assume that $A_2$ is $\overline{\mathbf{x}_{A_1}}$-$NO$ in $G$. Then for any pattern $\mathbf{p}$, $N^*\mathbf{p}=\mathbf{1}\; \text{if and only if}\;  N\mathbf{p}=\overline{\mathbf{x}_{A_1\cup A_2}}.$ Moreover, $A_1$ and $A_2$ are $AO$ sets in $G^*$ , and $\Delta \nu =0$.
\end{prop}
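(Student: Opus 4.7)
The plan is to follow the same case-analysis template used in Propositions \ref{NN0} through \ref{NA1}. First I would assume $N^*\mathbf{p}=\mathbf{1}$ and split into the four subcases according to the pair of bits $(\mathbf{x}_{A_1}\cdot\mathbf{p},\,\mathbf{x}_{A_2}\cdot\mathbf{p})$. Using \eqref{eqn1}, each subcase forces $N\mathbf{p}$ to equal one of $\mathbf{1}$, $\overline{\mathbf{x}_{A_1}}$, $\overline{\mathbf{x}_{A_2}}$, or $\overline{\mathbf{x}_{A_1\cup A_2}}$. I expect the first three to lead to contradictions: $N\mathbf{p}=\mathbf{1}$ would force $\mathbf{x}_{A_1}\cdot\mathbf{p}=1$ since $A_1$ is $AO$ in $G$, contradicting $\mathbf{x}_{A_1}\cdot\mathbf{p}=0$; the case $N\mathbf{p}=\overline{\mathbf{x}_{A_1}}$ contradicts the standing hypothesis that $A_2$ is $\overline{\mathbf{x}_{A_1}}$-$NO$; and the case $N\mathbf{p}=\overline{\mathbf{x}_{A_2}}$ is excluded after invoking Lemma \ref{lem3}(i) to convert the $\overline{\mathbf{x}_{A_1}}$-$NO$ hypothesis on $A_2$ into $A_1$ being $\overline{\mathbf{x}_{A_2}}$-$NO$ (valid because both sets are $AO$), which contradicts $\mathbf{x}_{A_1}\cdot\mathbf{p}=1$. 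Only $N\mathbf{p}=\overline{\mathbf{x}_{A_1\cup A_2}}$ survives.

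For the converse, assume $N\mathbf{p}=\overline{\mathbf{x}_{A_1\cup A_2}}=\overline{\mathbf{x}_{A_1}}+\mathbf{x}_{A_2}=\overline{\mathbf{x}_{A_2}}+\mathbf{x}_{A_1}$. Using the reserved notation convention, write $\mathbf{p}=\mathbf{p_1}+\mathbf{r_2}=\mathbf{p_2}+\mathbf{r_1}$. By Lemma \ref{prop1}, the $AO$ hypothesis on each $A_i$ gives $\mathbf{x}_{A_1}\cdot\mathbf{r_1}=\mathbf{x}_{A_2}\cdot\mathbf{r_2}=1$. The standing hypothesis gives $\mathbf{x}_{A_2}\cdot\mathbf{p_1}=0$, and Lemma \ref{lem3}(i) promotes this to $\mathbf{x}_{A_1}\cdot\mathbf{p_2}=0$. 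Combining in the two decompositions yields $\mathbf{x}_{A_1}\cdot\mathbf{p}=\mathbf{x}_{A_2}\cdot\mathbf{p}=1$, and then \eqref{eqn1} gives $N^*\mathbf{p}=\overline{\mathbf{x}_{A_1\cup A_2}}+\mathbf{x}_{A_2}+\mathbf{x}_{A_1}=\mathbf{1}$.

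Finally, for the counting and activation-type statements: by (\emph{O 2}) the equation $N\mathbf{p}=\overline{\mathbf{x}_{A_1\cup A_2}}$ has exactly $2^{\nu(G)}$ solutions, while $G^*$ has $2^{\nu(G^*)}$ odd dominating patterns; the equivalence just established matches these in bijection, so $\nu(G^*)=\nu(G)$ and $\Delta\nu=0$. The converse argument showed that every odd dominating pattern $\mathbf{p}$ of $G^*$ satisfies $\mathbf{x}_{A_1}\cdot\mathbf{p}=\mathbf{x}_{A_2}\cdot\mathbf{p}=1$, so both $A_1$ and $A_2$ are $AO$ in $G^*$.

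The main obstacle I anticipate is the careful bookkeeping in applying Lemma \ref{lem3}(i): both sets being $AO$ is needed to get the symmetric exchange of the $\overline{\mathbf{x}_{A_i}}$-$NO$ conditions, and that symmetric exchange is used twice (once to rule out $N\mathbf{p}=\overline{\mathbf{x}_{A_2}}$ in the forward direction, and once to get $\mathbf{x}_{A_1}\cdot\mathbf{p_2}=0$ in the converse). Beyond this, the computation is an essentially mechanical translation of the scheme in Propositions \ref{NN0}--\ref{NA1}.
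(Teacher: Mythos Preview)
Your proposal is correct and follows essentially the same approach as the paper's own proof: the same four-case analysis in the forward direction, the same decomposition $\mathbf{p}=\mathbf{p_1}+\mathbf{r_2}=\mathbf{p_2}+\mathbf{r_1}$ with Lemma~\ref{prop1} and Lemma~\ref{lem3}(i) in the converse, and the same counting argument for $\Delta\nu$ and the activation types. Your identification of Lemma~\ref{lem3}(i) as the key ingredient (used twice, in both directions) matches exactly how the paper deploys it.
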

\begin{proof}
Let $N^*\mathbf{p}=\mathbf{1}$. First assume that $\mathbf{x}_{A_1}\cdot \mathbf{p}=\mathbf{x}_{A_2}\cdot \mathbf{p}=0.$ Then, by \eqref{eqn1}, $N\mathbf{p}=\mathbf{1}$. Since $A_1$ is an $AO$ set in $G$ we must have $\mathbf{x}_{A_1}\cdot \mathbf{p}=1$, which is a contradiction.

Second assume that $\mathbf{x}_{A_1}\cdot \mathbf{p}=0$,  $\mathbf{x}_{A_2}\cdot \mathbf{p}=1.$ Then, by \eqref{eqn1}, $N\mathbf{p}=\overline{\mathbf{x}_{A_1}}$. So, by the assumption of the proposition $\mathbf{x}_{A_2}\cdot \mathbf{p}=0,$ which is a contradiction.

Third assume that $\mathbf{x}_{A_1}\cdot \mathbf{p}=1$,  $\mathbf{x}_{A_2}\cdot \mathbf{p}=0.$ Then,  by \eqref{eqn1}, $N\mathbf{p}=\overline{\mathbf{x}_{A_2}}$. By the assumption of the proposition and Lemma \ref{lem3}, $\mathbf{x}_{A_1}\cdot \mathbf{p}=0$, which is a contradiction.

So the last case where $\mathbf{x}_{A_1}\cdot \mathbf{p}=1$,  $\mathbf{x}_{A_2}\cdot \mathbf{p}=1$ is the only possible case, which leads to $N\mathbf{p}= \overline{\mathbf{x}_{A_1\cup A_2}}$ by \eqref{eqn1}.

Conversely, let $N\mathbf{p}= \overline{\mathbf{x}_{A_1\cup A_2}}=\overline{\mathbf{x}_{A_1}}+ \mathbf{x}_{A_2}= \overline{\mathbf{x}_{A_2}}+ \mathbf{x}_{A_1}.$ Then, $\mathbf{p}=\mathbf{p_1}+\mathbf{r_2}=\mathbf{p_2}+\mathbf{r_1}$. By assumption $A_1$ and $A_2$ are $AO$. By Lemma \ref{prop1}, $A_1$ and $A_2$ are  $\mathbf{x}_{A_1}$-$AO$ and $\mathbf{x}_{A_2}$-$AO$, respectively. Hence, $\mathbf{x}_{A_1}\cdot \mathbf{r_1}=\mathbf{x}_{A_2}\cdot \mathbf{r_2}=1$. Moreover, by assumption and Lemma \ref{lem3}, $\mathbf{x}_{A_2}\cdot \mathbf{p_1}=\mathbf{x}_{A_1}\cdot \mathbf{p_2}=0$. Then, $\mathbf{x}_{A_1}\cdot \mathbf{p} =\mathbf{x}_{A_1}\cdot \mathbf{p_2}+ \mathbf{x}_{A_1}\cdot \mathbf{r_1}=1$, and $\mathbf{x}_{A_2}\cdot \mathbf{p} =\mathbf{x}_{A_2}\cdot \mathbf{p_1}+ \mathbf{x}_{A_2}\cdot \mathbf{r_2}=1$. Consequently, by \eqref{eqn1}, $N^* \mathbf{p}= \overline{\mathbf{x}_{A_1\cup A_2}}+\overline{\mathbf{x}_{A_1}}+\overline{\mathbf{x}_{ A_2}}=\mathbf{1}.$

There exist $2^{\nu(G)}$ patterns satisfying $N\mathbf{p}=\overline{\mathbf{x}_{A_1\cup A_2}}$ and there are $2^{\nu(G^*)}$ odd dominating patterns of $G^*$. Hence, $\nu(G^*)=\nu(G)$, which gives $\Delta\nu=0$. Moreover $\mathbf{x}_{A_1}\cdot \mathbf{p}=1$ and $\mathbf{x}_{A_2}\cdot \mathbf{p}=1$  in case of $\overline{\mathbf{x}_{A_1\cup A_2}}$. Therefore,  $A_1$ and $A_2$ are $AO$ sets in $G^*$.
\end{proof}

\begin{prop}
\label{AA1}
Let $A_1$ and $A_2$ be two  $AO$ sets in $G$. Also assume that $A_2$ is $\overline{\mathbf{x}_{A_1}}$-$AO$ in $G$. Then for any pattern $\mathbf{p}$, $N^*\mathbf{p}=\mathbf{1}\; \text{if and only if}\;  N\mathbf{p}=\overline{\mathbf{x}_{A_1}} \; \text{or}\; N\mathbf{p}=\overline{\mathbf{x}_{A_2}}. $ Moreover, $A_1$ and $A_2$ are  $HO$ sets in $G^*$, and $\Delta \nu =1$.
\end{prop}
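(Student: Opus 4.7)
The plan is to follow the same four-case dissection used throughout Propositions \ref{NN0}--\ref{AA0}: assume $N^*\mathbf{p}=\mathbf{1}$, read off $N\mathbf{p}$ from \eqref{eqn1} in each of the four possible value pairs for $(\mathbf{x}_{A_1}\cdot\mathbf{p},\mathbf{x}_{A_2}\cdot\mathbf{p})$, and use Lemma \ref{lem2}, Lemma \ref{prop1}, and Lemma \ref{lem3} to rule out two of them. The hypothesis that $A_1,A_2$ are both $AO$, together with the assumption that $A_2$ is $\overline{\mathbf{x}_{A_1}}$-$AO$, will force exactly the two ``mixed'' cases to survive, which is precisely the dichotomy $N\mathbf{p}=\overline{\mathbf{x}_{A_1}}$ or $N\mathbf{p}=\overline{\mathbf{x}_{A_2}}$. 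After that, the quantitative conclusions ($\Delta\nu=1$ and $A_1,A_2$ being $HO$ in $G^*$) follow by counting solving patterns, exactly as in Proposition \ref{NA0}.

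For the forward direction, the cases $(0,0)$ and $(0,1)$ are easy: in the first, \eqref{eqn1} gives $N\mathbf{p}=\mathbf{1}$, contradicting $A_1$ being $AO$ in $G$; in the second it gives $N\mathbf{p}=\overline{\mathbf{x}_{A_1}}$, one of the allowed outcomes. Symmetrically $(1,0)$ yields $N\mathbf{p}=\overline{\mathbf{x}_{A_2}}$. The case I expect to need a little care is $(1,1)$, where \eqref{eqn1} gives $N\mathbf{p}=\overline{\mathbf{x}_{A_1\cup A_2}}=\overline{\mathbf{x}_{A_1}}+\mathbf{x}_{A_2}$, so $\mathbf{p}=\mathbf{p_1}+\mathbf{r_2}$. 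By Lemma \ref{prop1} the $AO$-ness of $A_2$ gives $\mathbf{x}_{A_2}\cdot\mathbf{r_2}=1$, while the assumption that $A_2$ is $\overline{\mathbf{x}_{A_1}}$-$AO$ gives $\mathbf{x}_{A_2}\cdot\mathbf{p_1}=1$; these combine to $\mathbf{x}_{A_2}\cdot\mathbf{p}=0$, contradicting the case assumption. This is the only step where the new ``$AO$-$AO$ under $\overline{\mathbf{x}_{A_1}}$-$AO$'' hypothesis is really used, and it is the main (mild) obstacle.

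For the converse, if $N\mathbf{p}=\overline{\mathbf{x}_{A_1}}$ then Lemma \ref{lem2} gives $\mathbf{x}_{A_1}\cdot\mathbf{p}=0$ and the hypothesis gives $\mathbf{x}_{A_2}\cdot\mathbf{p}=1$, so \eqref{eqn1} yields $N^*\mathbf{p}=\overline{\mathbf{x}_{A_1}}+\mathbf{x}_{A_1}=\mathbf{1}$. For $N\mathbf{p}=\overline{\mathbf{x}_{A_2}}$ the argument is symmetric; here I would invoke Lemma \ref{lem3}(i) to transfer ``$A_2$ is $\overline{\mathbf{x}_{A_1}}$-$AO$'' to ``$A_1$ is $\overline{\mathbf{x}_{A_2}}$-$AO$'', giving $\mathbf{x}_{A_1}\cdot\mathbf{p}=1$ together with $\mathbf{x}_{A_2}\cdot\mathbf{p}=0$ from Lemma \ref{lem2}. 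Finally, each of the two equations $N\mathbf{p}=\overline{\mathbf{x}_{A_1}}$ and $N\mathbf{p}=\overline{\mathbf{x}_{A_2}}$ has $2^{\nu(G)}$ solutions (their solution sets are disjoint cosets of $\mathrm{Ker}(N)$), so the total $2\cdot 2^{\nu(G)}$ equals $2^{\nu(G^*)}$ by (\emph{O 2}), whence $\Delta\nu=1$. Reading off the values of $\mathbf{x}_{A_1}\cdot\mathbf{p}$ and $\mathbf{x}_{A_2}\cdot\mathbf{p}$ across the two cases shows each takes both values $0$ and $1$, so both $A_1$ and $A_2$ are $HO$ in $G^*$.
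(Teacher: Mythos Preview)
Your proposal is correct and follows essentially the same approach as the paper's proof: the same four-case dissection via \eqref{eqn1}, the same use of Lemmas \ref{lem2}, \ref{prop1}, and \ref{lem3} to eliminate the $(0,0)$ and $(1,1)$ cases, and the same counting argument for $\Delta\nu$ and the $HO$ conclusion. The only cosmetic difference is that in the $(1,1)$ case you decompose $\mathbf{p}=\mathbf{p_1}+\mathbf{r_2}$ and derive a contradiction on $\mathbf{x}_{A_2}\cdot\mathbf{p}$, whereas the paper uses the symmetric decomposition $\mathbf{p}=\mathbf{p_2}+\mathbf{r_1}$ and contradicts $\mathbf{x}_{A_1}\cdot\mathbf{p}$; both are equally valid.
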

\begin{proof}
Let $N^*\mathbf{p}=\mathbf{1}$. First assume that $\mathbf{x}_{A_1}\cdot \mathbf{p}=\mathbf{x}_{A_2}\cdot \mathbf{p}=0.$ Then, by \eqref{eqn1}, $N\mathbf{p}=\mathbf{1}$. Since $A_1$ is an $AO$ set in $G$, we must have $\mathbf{x}_{A_1}\cdot \mathbf{p}=1$, which is a contradiction.

Second assume that $\mathbf{x}_{A_1}\cdot \mathbf{p}=\mathbf{x}_{A_2}\cdot \mathbf{p}=1.$ Then, by \eqref{eqn1}, $N\mathbf{p}=\overline{\mathbf{x}_{A_1\cup A_2}}= \overline{\mathbf{x}_{A_2}}+ \mathbf{x}_{A_1}$. Thus, $\mathbf{p}=\mathbf{p_2}+\mathbf{r_1}.$ By assumption of the proposition and Lemma \ref{lem3}, $\mathbf{x}_{A_1}\cdot \mathbf{p_2}=1$. Lemma \ref{prop1} and the fact that $A_1$ is an $AO$ set in $G$ imply $\mathbf{x}_{A_1}\cdot \mathbf{r_1}=1.$ Thus, $\mathbf{x}_{A_1}\cdot \mathbf{p}=\mathbf{x}_{A_1}\cdot \mathbf{p_2}+\mathbf{x}_{A_1}\cdot \mathbf{r_1}=1+1=0$, which is a contradiction.

So either the case where $\mathbf{x}_{A_1}\cdot \mathbf{p}=0$,  $\mathbf{x}_{A_2}\cdot \mathbf{p}=1$ or the case where $\mathbf{x}_{A_1}\cdot \mathbf{p}=1$,  $\mathbf{x}_{A_2}\cdot \mathbf{p}=0$
holds true, which, by \eqref{eqn1}, leads to $N\mathbf{p}= \overline{\mathbf{x}_{A_1}}$ or $N\mathbf{p}= \overline{\mathbf{x}_{A_2}}$, respectively.

Conversely, assume first that $N\mathbf{p}= \overline{\mathbf{x}_{A_1}}$. By the assumption of the proposition, $\mathbf{x}_{A_2}\cdot \mathbf{p}=1.$ By Lemma \ref{lem2},  $\mathbf{x}_{A_1}\cdot \mathbf{p}=0.$ Hence, by \eqref{eqn1}, $N\mathbf{p}= \overline{\mathbf{x}_{A_2}}+\mathbf{x}_{A_2}=\mathbf{1}$.

Second assume that $N\mathbf{p}= \overline{\mathbf{x}_{A_2}}$. By the assumption of the proposition and by Lemma \ref{lem3},  $\mathbf{x}_{A_1}\cdot \mathbf{p}=1.$ By Lemma \ref{lem2},  $\mathbf{x}_{A_2}\cdot \mathbf{p}=0.$ Hence, by \eqref{eqn1}, $N\mathbf{p}= \overline{\mathbf{x}_{A_2}}+\mathbf{x}_{A_2}=\mathbf{1}$.

There exist $2 .2^{\nu(G)}$ patterns satisfying $N\mathbf{p}=\overline{\mathbf{x}_{A_1}} \; \text{or}\;  N\mathbf{p}=\overline{\mathbf{x}_{A_2}} ,$ and there are $2^{\nu(G^*)}$ odd dominating patterns of $G^*$. Hence $\nu(G^*)=\nu(G)+1$, which gives $\Delta\nu=1$. $\mathbf{x}_{A_1}\cdot \mathbf{p}$ is $0$ in the case $N\mathbf{p}=\overline{\mathbf{x}_{A_1}}$ and $1$ in the case $N\mathbf{p}=\overline{\mathbf{x}_{A_2}}$, which makes $A_1$ a $HO$ set in $G^*$. On the other hand, $\mathbf{x}_{A_2}\cdot \mathbf{p}$ is $1$ in the case $N\mathbf{p}=\overline{\mathbf{x}_{A_1}}$ and $0$ in the case $N\mathbf{p}=\overline{\mathbf{x}_{A_2}}$. Hence, $A_2$ a $HO$ set in $G^*$ as well.

\end{proof}

\begin{prop}
\label{NH}
Let $A_1$ be a $NO$ and $A_2$ be a $HO$ set in $G$. Then for any pattern $\mathbf{p}$, $N^*\mathbf{p}=\mathbf{1}\; \text{if and only if}\;  N\mathbf{p}=\mathbf{1} \; \text{with}\; \mathbf{x}_{A_2}\cdot \mathbf{p}=0\; \text{or}\; N\mathbf{p}=\overline{\mathbf{x}_{A_1}}\;\text{with} \; \mathbf{x}_{A_2}\cdot \mathbf{p}=1.$ Moreover, $A_1$ is a $NO$ and $A_2$ is a $HO$ set in $G^*$, and $\Delta \nu =0$.
\end{prop}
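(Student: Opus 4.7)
The plan is to mirror the proof template used throughout this section: substitute the four possible value-pairs of $a:=\mathbf{x}_{A_1}\cdot\mathbf{p}$ and $b:=\mathbf{x}_{A_2}\cdot\mathbf{p}$ into the key identity \eqref{eqn1}, discard the combinations that contradict the hypotheses on $A_1$ and $A_2$, then treat the converse and the counting separately. Writing $N^*\mathbf{p}=\mathbf{1}$ via \eqref{eqn1} as $N\mathbf{p}=\mathbf{1}+a\mathbf{x}_{A_2}+b\mathbf{x}_{A_1}$ produces four candidates for $N\mathbf{p}$: namely $\mathbf{1}$, $\overline{\mathbf{x}_{A_1}}$, $\overline{\mathbf{x}_{A_2}}$, and $\overline{\mathbf{x}_{A_1\cup A_2}}$. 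The first two are exactly the conclusion of the proposition (after matching each with the forced value of $b$). The third, $N\mathbf{p}=\overline{\mathbf{x}_{A_2}}$, would make $\overline{\mathbf{x}_{A_2}}$ solvable, hence $\mathbf{x}_{A_2}$ solvable by Lemma \ref{lemx}, contradicting that $A_2$ is $HO$. The fourth would force $\mathbf{x}_{A_1\cup A_2}=\mathbf{x}_{A_1}+\mathbf{x}_{A_2}$ to be solvable; combining this with the solvability of $\mathbf{x}_{A_1}$ (guaranteed by $A_1$ being $NO$) would again make $\mathbf{x}_{A_2}$ solvable, the same contradiction.

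The converse direction is a direct verification through \eqref{eqn1}. If $N\mathbf{p}=\mathbf{1}$ with $\mathbf{x}_{A_2}\cdot\mathbf{p}=0$, then $\mathbf{p}$ is an odd dominating pattern of $G$, and since $A_1$ is $NO$ in $G$ we also get $\mathbf{x}_{A_1}\cdot\mathbf{p}=0$; both correction terms in \eqref{eqn1} vanish, giving $N^*\mathbf{p}=\mathbf{1}$. If $N\mathbf{p}=\overline{\mathbf{x}_{A_1}}$ with $\mathbf{x}_{A_2}\cdot\mathbf{p}=1$, Lemma \ref{lem2} (applied to the solvable vector $\mathbf{x}_{A_1}$) supplies $\mathbf{x}_{A_1}\cdot\mathbf{p}=0$, so \eqref{eqn1} gives $N^*\mathbf{p}=\overline{\mathbf{x}_{A_1}}+\mathbf{x}_{A_1}=\mathbf{1}$.

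For the cardinality and activation-type claims, I would apply Proposition \ref{prop2} with $A_2$ as the test set. Since $A_2$ is not solvable, among the $2^{\nu(G)}$ solving patterns of $\mathbf{1}$ exactly half satisfy $\mathbf{x}_{A_2}\cdot\mathbf{p}=0$, and among the $2^{\nu(G)}$ solving patterns of $\overline{\mathbf{x}_{A_1}}$ exactly half satisfy $\mathbf{x}_{A_2}\cdot\mathbf{p}=1$. These two families are disjoint (distinct right-hand sides), so the total count of odd dominating patterns of $G^*$ is $2\cdot 2^{\nu(G)-1}=2^{\nu(G)}$, and by (\emph{O 2}) this yields $\Delta\nu=0$. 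Reading off the parities, every odd dominating pattern $\mathbf{p}$ of $G^*$ has $\mathbf{x}_{A_1}\cdot\mathbf{p}=0$, so $A_1$ is $NO$ in $G^*$; while $\mathbf{x}_{A_2}\cdot\mathbf{p}$ is $0$ on the first family and $1$ on the second, so $A_2$ is $HO$ in $G^*$.

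The only step that requires care is the application of Proposition \ref{prop2} with $\overline{\mathbf{x}_{A_1}}$ (rather than $\mathbf{1}$) as the reference solvable configuration; once one notes that the half-odd-activated property of $A_2$ depends only on $A_2$ being non-solvable and not on the chosen $\mathbf{c}$, this is routine. I do not anticipate any further obstacles.
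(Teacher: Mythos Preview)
Your proof is correct and follows essentially the same approach as the paper's: both argue by the four-case split on $(\mathbf{x}_{A_1}\cdot\mathbf{p},\,\mathbf{x}_{A_2}\cdot\mathbf{p})$ via \eqref{eqn1}, eliminate the cases $N\mathbf{p}=\overline{\mathbf{x}_{A_2}}$ and $N\mathbf{p}=\overline{\mathbf{x}_{A_1\cup A_2}}$ by the non-solvability of $A_2$ (the paper phrases the second elimination as ``$A_1$ $NO$ and $A_2$ $HO$ imply $A_1\cup A_2$ $HO$'', which is the same observation you make), verify the converse with Lemma~\ref{lem2} and the $NO$ hypothesis on $A_1$, and count via Proposition~\ref{prop2}. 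Your remark that the $HO$ property of $A_2$ is independent of the reference configuration $\mathbf{c}$ is exactly what justifies the $2^{\nu(G)-1}$ count in the second family, and the paper uses this implicitly in the same way.
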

\begin{proof}
Let $N^*\mathbf{p}=\mathbf{1}$. First assume that $\mathbf{x}_{A_1}\cdot \mathbf{p}=1$ and $\mathbf{x}_{A_2}\cdot \mathbf{p}=0.$ Then, by \eqref{eqn1}, $N\mathbf{p}=\overline{\mathbf{x}_{A_2}}$. By Lemma \ref{lemx} $A_2$ is solvable, which is a contradiction.

Second assume that $\mathbf{x}_{A_1}\cdot \mathbf{p}=1$ and $\mathbf{x}_{A_2}\cdot \mathbf{p}=1.$ Then, by \eqref{eqn1}, $N\mathbf{p}=\overline{\mathbf{x}_{A_1\cup A_2}}$.  By Lemma \ref{lemx}, $A_1 \cup A_2$ is solvable, i.e.; it is not $HO$. On the other hand, $A_1$ is $NO$ and $A_2$ is $HO$ imply $A_1 \cup A_2 $ is $HO$. Thus, we have a contradiction.

So either $\mathbf{x}_{A_1}\cdot \mathbf{p}=\mathbf{x}_{A_2}\cdot \mathbf{p}=0,$ which,  by \eqref{eqn1}, implies $N\mathbf{p}=\mathbf{1}$ with $\mathbf{x}_{A_2}\cdot \mathbf{p}=0$; or $\mathbf{x}_{A_1}\cdot \mathbf{p}=0$ and $\mathbf{x}_{A_2}\cdot \mathbf{p}=1,$ which,  by \eqref{eqn1}, implies $N\mathbf{p}=\overline{\mathbf{x}_{A_1}} $ with $\mathbf{x}_{A_2}\cdot \mathbf{p}=1$.

Conversely, first assume that $N\mathbf{p}=\mathbf{1}$ with $\mathbf{x}_{A_2}\cdot \mathbf{p}=0$. Since $A_1$ is $NO$ in $G$, we have $\mathbf{x}_{A_1}\cdot \mathbf{p}=0$. Hence,  by \eqref{eqn1}, $N^*\mathbf{p}=\mathbf{1}$.
Second assume that $N\mathbf{p}=\overline{\mathbf{x}_{A_1}} $ with $\mathbf{x}_{A_2}\cdot \mathbf{p}=1$. Then, by Lemma \ref{lem2}, $\mathbf{x}_{A_1}\cdot \mathbf{p}=0$. Thus, by \eqref{eqn1}, $N^*\mathbf{p}= \overline{\mathbf{x}_{A_1}}+\mathbf{x}_{A_1} = \mathbf{1}.$

Since $A_2$ is $HO$ in $G$, there are $2^{\nu(G)-1}$ patterns satisfying $N\mathbf{p}=\mathbf{1} \; \text{with}\; \mathbf{x}_{A_2}\cdot \mathbf{p}=0$, and also there are  $2^{\nu(G)-1}$ patterns satisfying $N\mathbf{p}=\overline{\mathbf{x}_{A_1}}\;\text{with} \; \mathbf{x}_{A_2}\cdot \mathbf{p}=1.$ So there are $2^{\nu(G)}$ patterns satisfying one of the two cases. On the other hand, there are $2^{\nu(G^*)}$ odd dominating patterns of $G^*$. Hence, $\nu(G^*)=\nu(G)$, which gives $\Delta\nu=0$. In each case,  $\mathbf{x}_{A_1}\cdot \mathbf{p}=0$, which makes $A_1$ a $NO$ set in $G^*$. On the other hand, $\mathbf{x}_{A_2}\cdot \mathbf{p}=0$ in the first case and $\mathbf{x}_{A_2}\cdot \mathbf{p}=1$ in the second one, which makes  $A_2$ a $HO$ set in $G^*$.
\end{proof}

\begin{prop}
\label{AH}
Let $A_1$ be an $AO$ and $A_2$ be a $HO$ set $G$. Then for any pattern $\mathbf{p}$, $N^*\mathbf{p}=\mathbf{1}\; \text{if and only if}\;  N\mathbf{p}=\overline{\mathbf{x}_{A_1}}\;\text{with} \; \mathbf{x}_{A_2}\cdot \mathbf{p}=1.$ Moreover, $A_1$ is a $NO$ and $A_2$ is an $AO$ set $G^*$, and $\Delta \nu =-1$.
\end{prop}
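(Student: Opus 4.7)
The plan is to follow the template established in Propositions \ref{NN0}--\ref{AA1}: starting from $N^*\mathbf{p}=\mathbf{1}$, split into the four cases determined by the pair $(\mathbf{x}_{A_1}\cdot\mathbf{p},\mathbf{x}_{A_2}\cdot\mathbf{p})\in\mathbb{Z}_2^2$, translate each via \eqref{eqn1} into a condition of the form $N\mathbf{p}=\mathbf{v}$, and eliminate every case that contradicts the hypotheses on $A_1$ and $A_2$.

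Three of the cases should collapse immediately. In $(0,0)$, \eqref{eqn1} forces $N\mathbf{p}=\mathbf{1}$, which contradicts $A_1$ being $AO$. In $(1,0)$, it gives $N\mathbf{p}=\overline{\mathbf{x}_{A_2}}$, so $\overline{\mathbf{x}_{A_2}}$ would be solvable and, by Lemma \ref{lemx}, so would $\mathbf{x}_{A_2}$, contradicting $A_2$ being $HO$. In $(1,1)$, it gives $N\mathbf{p}=\overline{\mathbf{x}_{A_1\cup A_2}}=\overline{\mathbf{x}_{A_1}}+\mathbf{x}_{A_2}$; since $\overline{\mathbf{x}_{A_1}}$ is solvable by Lemma \ref{lemx} (as $A_1$ is), linearity would make $\mathbf{x}_{A_2}$ solvable, once more contradicting $HO$. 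Only $(0,1)$ survives, and it yields exactly $N\mathbf{p}=\overline{\mathbf{x}_{A_1}}$ together with $\mathbf{x}_{A_2}\cdot\mathbf{p}=1$. For the converse direction, $N\mathbf{p}=\overline{\mathbf{x}_{A_1}}$ combined with Lemma \ref{lem2} forces $\mathbf{x}_{A_1}\cdot\mathbf{p}=0$, after which \eqref{eqn1} and $\mathbf{x}_{A_2}\cdot\mathbf{p}=1$ recover $N^*\mathbf{p}=\overline{\mathbf{x}_{A_1}}+\mathbf{x}_{A_1}=\mathbf{1}$.

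The step I expect to demand the most care is the count, because this is the first proposition in the series that should yield a \emph{negative} increment $\Delta\nu$. The equation $N\mathbf{p}=\overline{\mathbf{x}_{A_1}}$ alone has $2^{\nu(G)}$ solutions, so the extra constraint $\mathbf{x}_{A_2}\cdot\mathbf{p}=1$ must cut this in half. To do that, I would fix one particular solving pattern $\mathbf{p_1}$; since $A_2$ is $HO$, observation (\emph{O 3}) supplies a null pattern $\boldsymbol{\ell_0}$ with $\mathbf{x}_{A_2}\cdot\boldsymbol{\ell_0}=1$, and Lemma \ref{lem1} then guarantees that exactly $2^{\nu(G)-1}$ null patterns $\boldsymbol{\ell}$ satisfy $\mathbf{x}_{A_2}\cdot(\mathbf{p_1}+\boldsymbol{\ell})=1$. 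Equating with $2^{\nu(G^*)}$ yields $\Delta\nu=-1$. Finally, since every odd dominating pattern of $G^*$ satisfies $\mathbf{x}_{A_1}\cdot\mathbf{p}=0$ and $\mathbf{x}_{A_2}\cdot\mathbf{p}=1$, one reads off that $A_1$ becomes $NO$ and $A_2$ becomes $AO$ in $G^*$.
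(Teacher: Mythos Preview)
Your proposal is correct and follows essentially the same route as the paper's proof: the same four-case split via \eqref{eqn1}, the same eliminations (the paper phrases the $(1,1)$ case as ``$A_1$ is $AO$ and $A_2$ is $HO$ imply $A_1\cup A_2$ is $HO$'', which is equivalent to your linearity remark), the same use of Lemma \ref{lem2} in the converse, and the same conclusion on activation types. For the count the paper simply invokes ``since $A_2$ is $HO$'' to halve the $2^{\nu(G)}$ solutions of $N\mathbf{p}=\overline{\mathbf{x}_{A_1}}$, which is exactly your Lemma \ref{lem1}/(\emph{O 3}) argument stated via Proposition \ref{prop2}.
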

\begin{proof}
Let $N^*\mathbf{p}=\mathbf{1}$. First assume that $\mathbf{x}_{A_1}\cdot \mathbf{p}=\mathbf{x}_{A_2}\cdot \mathbf{p}=0.$ Then, by \eqref{eqn1}, $N\mathbf{p}=\mathbf{1}$. Then, $\mathbf{x}_{A_1}\cdot \mathbf{p}=0$ since $A_1$ is $AO$, which is a contradiction.

Second assume that $\mathbf{x}_{A_1}\cdot \mathbf{p}=1$ and $\mathbf{x}_{A_2}\cdot \mathbf{p}=0.$ Then, by \eqref{eqn1}, $N\mathbf{p}=\overline{\mathbf{x}_{A_2}}$. By Lemma \ref{lemx}, $A_2$ is solvable, which is a contradiction.

Third assume that $\mathbf{x}_{A_1}\cdot \mathbf{p}=1$ and $\mathbf{x}_{A_2}\cdot \mathbf{p}=1.$ Then, by \eqref{eqn1}, $N\mathbf{p}=\overline{\mathbf{x}_{A_1 \cup A_2}}$. By Lemma \ref{lemx}, $A_1 \cup A_2$ is solvable. On the other hand, $A_1$ is $AO$ and $A_2$ is $HO$ imply $A_1 \cup A_2 $ is $HO$, which means it is not solvable. Thus, we have a contradiction.

So the only possible case is  $\mathbf{x}_{A_1}\cdot \mathbf{p}=0$ and $\mathbf{x}_{A_2}\cdot \mathbf{p}=1,$ which by \eqref{eqn1}, gives $N\mathbf{p}=\overline{\mathbf{x}_{A_1}}$ with $\mathbf{x}_{A_2}\cdot \mathbf{p}=1$.

Conversely assume that  $N\mathbf{p}=\overline{\mathbf{x}_{A_1}}$ with $\mathbf{x}_{A_2}\cdot \mathbf{p}=1$. By Lemma \ref{lem2}, $\mathbf{x}_{A_1}\cdot \mathbf{p}=0$. Hence, by \eqref{eqn1}, $N^*\mathbf{p}=\overline{\mathbf{x}_{A_1}}+\mathbf{x}_{A_1}=\mathbf{1}$.

Since $A_2$ is $HO$ in $G$, there are $2^{\nu(G)-1}$ patterns satisfying $N\mathbf{p}=\overline{\mathbf{x}_{A_1}} \; \text{with}\; \mathbf{x}_{A_2}\cdot \mathbf{p}=1$, On the other hand, there are $2^{\nu(G^*)}$ odd dominating patterns of $G^*$. Hence, $\nu(G^*)=\nu(G)-1$, which gives $\Delta\nu=-1$. The above equivalence shows that when $N^*\mathbf{p}=\mathbf{1}$, $\mathbf{x}_{A_1}\cdot \mathbf{p}=0$ and $\mathbf{x}_{A_2}\cdot \mathbf{p}=1$, which makes $A_1$ a $NO$ and $A_2$  an $AO$ set $G^*$.
\end{proof}

\begin{lem}
\label{lem4}
Let $G$ be a graph, $\mathbf{c}$ be a solvable configuration in $G$ and $S$ be the set of all solving patterns for $\mathbf{c}$ with $|S|=n$. Let $A_1$ and $A_2$ be two disjoint HO sets in $G$. For $i\in\{1,2\}$, define the sets $O_i=\{\mathbf{p}\in S\; | \;   \mathbf{x}_{A_i}\cdot \mathbf{p}=0 \}$ and $I_i=\{\mathbf{p}\in S\; | \;   \mathbf{x}_{A_i}\cdot \mathbf{p}=1 \}$. Then, $|O_1\cap O_2|=|I_1\cap I_2|,$ and $A_1\cup A_2$ is $\mathbf{c}$-NO, HO, or $\mathbf{c}$-AO if and only if  $|O_1\cap O_2|=n/2, \;n/4,\;\text{or}\; 0,$ respectively.
\end{lem}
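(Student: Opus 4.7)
The plan is to exploit the disjointness of $A_1$ and $A_2$, which gives $\mathbf{x}_{A_1\cup A_2} = \mathbf{x}_{A_1} + \mathbf{x}_{A_2}$, so the dot product $\mathbf{x}_{A_1\cup A_2}\cdot \mathbf{p}$ is determined by the pair $(\mathbf{x}_{A_1}\cdot \mathbf{p},\, \mathbf{x}_{A_2}\cdot \mathbf{p})$. Since each $A_i$ is $HO$, Proposition \ref{prop2} already gives $|O_i|=|I_i|=n/2$. The goal is to refine this to information about the joint distribution.

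The crucial step is the symmetry $|O_1\cap O_2|=|I_1\cap I_2|$. For this I would produce a single null pattern $\boldsymbol{\ell}$ satisfying $\mathbf{x}_{A_1}\cdot \boldsymbol{\ell}=\mathbf{x}_{A_2}\cdot \boldsymbol{\ell}=1$. By observation (\emph{O 3}), since $A_i$ is not solvable, for each $i$ there is a null pattern $\boldsymbol{\ell}_i$ with $\mathbf{x}_{A_i}\cdot \boldsymbol{\ell}_i=1$. A short case analysis on the values of $\mathbf{x}_{A_2}\cdot \boldsymbol{\ell}_1$ and $\mathbf{x}_{A_1}\cdot \boldsymbol{\ell}_2$ produces the required $\boldsymbol{\ell}$ as one of $\boldsymbol{\ell}_1$, $\boldsymbol{\ell}_2$, or $\boldsymbol{\ell}_1+\boldsymbol{\ell}_2$. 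Because the solving patterns of $\mathbf{c}$ form a coset of $\mathrm{Ker}(N)$, translation $\mathbf{p}\mapsto \mathbf{p}+\boldsymbol{\ell}$ is an involution on $S$; it flips both $\mathbf{x}_{A_1}\cdot \mathbf{p}$ and $\mathbf{x}_{A_2}\cdot \mathbf{p}$, hence swaps $O_1\cap O_2$ with $I_1\cap I_2$ (and also $O_1\cap I_2$ with $I_1\cap O_2$).

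Setting $a=|O_1\cap O_2|=|I_1\cap I_2|$ and $b=|O_1\cap I_2|=|I_1\cap O_2|$, the equality $|O_1|=a+b=n/2$ gives $a+b=n/2$. Splitting $S$ by the value of $\mathbf{x}_{A_1\cup A_2}\cdot \mathbf{p}=\mathbf{x}_{A_1}\cdot \mathbf{p}+\mathbf{x}_{A_2}\cdot \mathbf{p}$ shows that $2a$ solving patterns of $\mathbf{c}$ give the value $0$ and $2b$ give the value $1$. Now $A_1\cup A_2$ is $\mathbf{c}$-$NO$ iff $2b=0$, i.e.\ $a=n/2$; it is $\mathbf{c}$-$AO$ iff $2a=0$; and by Proposition \ref{prop2} it is $HO$ iff exactly half of $S$ yields the value $1$, i.e.\ $2b=n/2$, equivalently $a=n/4$.

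The main (and only real) obstacle is the construction of the null pattern $\boldsymbol{\ell}$ with $\mathbf{x}_{A_1}\cdot \boldsymbol{\ell}=\mathbf{x}_{A_2}\cdot \boldsymbol{\ell}=1$; once this symmetric involution is in hand, the characterization is just bookkeeping with the two counts $a$ and $b$ constrained by $a+b=n/2$.
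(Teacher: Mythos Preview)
Your argument is correct, and for the final bookkeeping (splitting $S$ by the value of $\mathbf{x}_{A_1\cup A_2}\cdot\mathbf p$ and reading off the three cases from $|O_{12}|=2a$) you proceed essentially as the paper does. The one genuine difference is how you obtain the key equality $|O_1\cap O_2|=|I_1\cap I_2|$. The paper gets it by pure counting: since $S=O_2\sqcup I_2=O_1\sqcup I_1$, the decompositions $I_1=(O_2\cap I_1)\cup(I_2\cap I_1)$ and $O_2=(O_1\cap O_2)\cup(I_1\cap O_2)$ together with $|I_1|=|O_2|=n/2$ force $|I_1\cap I_2|=|O_1\cap O_2|$ after cancelling the common term $|I_1\cap O_2|$. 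No null pattern needs to be produced. Your route instead manufactures a null pattern $\boldsymbol{\ell}$ with $\mathbf{x}_{A_1}\cdot\boldsymbol{\ell}=\mathbf{x}_{A_2}\cdot\boldsymbol{\ell}=1$ (the case split on $\boldsymbol{\ell}_1,\boldsymbol{\ell}_2,\boldsymbol{\ell}_1+\boldsymbol{\ell}_2$ is fine) and then uses $\mathbf p\mapsto\mathbf p+\boldsymbol{\ell}$ as an explicit involution swapping $O_1\cap O_2$ with $I_1\cap I_2$. Your approach is slightly longer but more transparent, and as a bonus the same involution gives $|O_1\cap I_2|=|I_1\cap O_2|$ directly; the paper's counting argument is shorter and avoids invoking (\emph{O 3}) and the coset structure of $S$ beyond what Proposition~\ref{prop2} already encodes.
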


\begin{proof}

First note that, by Proposition \ref{prop2}, $|O_1|=|I_1|=|O_2|=|I_2|=n/2$ since $A_1$ and $A_2$ are $HO$. Moreover, $$I_1= S\cap I_1 =(O_2\cap I_1) \cup (I_2\cap I_1)$$
$$O_2= S\cap O_2 =(O_1\cap O_2) \cup (I_1\cap O_2),$$
which imply $n/2=|O_2\cap I_1|+|I_2\cap I_1|=|O_1\cap O_2|+|I_1\cap O_2|$. Hence, $|O_1\cap O_2|=|I_1\cap I_2|.$
 Let $O_{12}:=\{\mathbf{p}\in S\; | \;   \mathbf{x}_{A_1\cup A_2}\cdot \mathbf{p}=0 \}$.  Since  $\mathbf{x}_{A_1\cup A_2}\cdot \mathbf{p}=\mathbf{x}_{A_1}\cdot \mathbf{p}+\mathbf{x}_{A_2}\cdot \mathbf{p}$, we have $O_{12}= (O_1\cap O_2) \cup (I_1\cap I_2)$. Hence, $|O_{12}|=|O_1\cap O_2|+|I_1\cap I_2|= 2 |O_1\cap O_2|$. Now, the result follows by the fact that $A_1\cup A_2$ is $\mathbf{c}$-$NO$, $HO$, or $\mathbf{c}$-$AO$ if and only if  $|O_{12}|=n, \;n/2,\;\text{or}\; 0,$ respectively.

\end{proof}
\begin{prop}
\label{HHH}
Let   $A_1$, $A_2$, and $A_1 \cup A_2$ be $HO$ sets in $G$. Then for any pattern $\mathbf{p}$, $N^*\mathbf{p}=\mathbf{1}\; \text{if and only if}\;  N\mathbf{p}=\mathbf{1}$ with $\mathbf{x}_{A_1}\cdot \mathbf{p}=\mathbf{x}_{A_2}\cdot \mathbf{p}=0$. Moreover, $A_1$ and $A_2$ are $NO$ sets in $G^*$ and $\Delta\nu=-2$.
\end{prop}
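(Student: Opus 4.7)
The plan is to mimic the case-analysis structure used in the preceding propositions: starting from $N^*\mathbf{p}=\mathbf{1}$, I would split into the four cases according to the values of $\mathbf{x}_{A_1}\cdot \mathbf{p}$ and $\mathbf{x}_{A_2}\cdot \mathbf{p}$, apply the key identity \eqref{eqn1} $N^*\mathbf{p}=N\mathbf{p}+(\mathbf{x}_{A_1}\cdot \mathbf{p})\mathbf{x}_{A_2}+(\mathbf{x}_{A_2}\cdot \mathbf{p})\mathbf{x}_{A_1}$ in each case, and use the $HO$ hypotheses plus Lemma \ref{lemx} to eliminate three of the four possibilities.

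Concretely, if $\mathbf{x}_{A_1}\cdot \mathbf{p}=1$ and $\mathbf{x}_{A_2}\cdot \mathbf{p}=0$, then \eqref{eqn1} forces $N\mathbf{p}=\overline{\mathbf{x}_{A_2}}$, so $\overline{\mathbf{x}_{A_2}}$ is solvable, and Lemma \ref{lemx} makes $A_2$ solvable, contradicting $A_2$ being $HO$. The symmetric case $\mathbf{x}_{A_1}\cdot \mathbf{p}=0$, $\mathbf{x}_{A_2}\cdot \mathbf{p}=1$ rules out $A_1$ analogously. The remaining bad case $\mathbf{x}_{A_1}\cdot \mathbf{p}=\mathbf{x}_{A_2}\cdot \mathbf{p}=1$ yields $N\mathbf{p}=\overline{\mathbf{x}_{A_1\cup A_2}}$, and Lemma \ref{lemx} would then make $A_1\cup A_2$ solvable, contradicting the hypothesis that $A_1\cup A_2$ is $HO$. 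This leaves only $\mathbf{x}_{A_1}\cdot \mathbf{p}=\mathbf{x}_{A_2}\cdot \mathbf{p}=0$ with $N\mathbf{p}=\mathbf{1}$, and for this case \eqref{eqn1} trivially gives back $N^*\mathbf{p}=\mathbf{1}$, which handles the converse.

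The main obstacle, and the only part that is not a direct copy of the template used in Propositions \ref{NN0}--\ref{AH}, is the counting step for $\Delta\nu$. Here I would invoke Lemma \ref{lem4} with $\mathbf{c}=\mathbf{1}$: the solving patterns for $\mathbf{1}$ are exactly the $n=2^{\nu(G)}$ odd dominating patterns of $G$, and both $A_1$ and $A_2$ are $HO$. Since $A_1\cup A_2$ is also $HO$, Lemma \ref{lem4} gives $|O_1\cap O_2|=n/4=2^{\nu(G)-2}$. Thus the number of patterns $\mathbf{p}$ with $N\mathbf{p}=\mathbf{1}$, $\mathbf{x}_{A_1}\cdot \mathbf{p}=0$, and $\mathbf{x}_{A_2}\cdot \mathbf{p}=0$ is exactly $2^{\nu(G)-2}$, which by the equivalence just established equals $2^{\nu(G^*)}$. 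Therefore $\nu(G^*)=\nu(G)-2$, so $\Delta\nu=-2$.

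Finally, since every odd dominating pattern $\mathbf{p}$ of $G^*$ satisfies $\mathbf{x}_{A_1}\cdot \mathbf{p}=\mathbf{x}_{A_2}\cdot \mathbf{p}=0$, both $A_1$ and $A_2$ are $NO$ in $G^*$, completing the proposition. The only subtlety worth double-checking is that Lemma \ref{lem4} applies to the solvable configuration $\mathbf{1}$ (so that the solving patterns are the odd dominating patterns), which is immediate since every graph has an odd dominating pattern.
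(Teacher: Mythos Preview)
Your proposal is correct and follows essentially the same approach as the paper's own proof: the same four-case analysis via \eqref{eqn1}, elimination of three cases using Lemma~\ref{lemx} and the $HO$ hypotheses, and the counting step via Lemma~\ref{lem4} with $\mathbf{c}=\mathbf{1}$ to obtain $|O_1\cap O_2|=n/4$ and hence $\Delta\nu=-2$. The only cosmetic difference is the order in which you treat the two asymmetric cases.
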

\begin{proof}
Let $N^*\mathbf{p}=\mathbf{1}$. Assume that $\mathbf{x}_{A_1}\cdot \mathbf{p}= 0$ and $\mathbf{x}_{A_2}\cdot \mathbf{p}=1.$ Then, by \eqref{eqn1}, $N\mathbf{p}=\overline{\mathbf{x}_{A_1}}$.
By Lemma \ref{lemx} $A_1$ is solvable, which is a contradiction.

Assuming $\mathbf{x}_{A_1}\cdot \mathbf{p}= 1$ and $\mathbf{x}_{A_2}\cdot \mathbf{p}=0$ leads to a similar contradiction.

Assuming $\mathbf{x}_{A_1}\cdot \mathbf{p}= 1$ and $\mathbf{x}_{A_2}\cdot \mathbf{p}=1$ we obtain $N\mathbf{p}=\overline{\mathbf{x}_{A_1 \cup A_2}}$ by \eqref{eqn1}. Hence $A_1 \cup A_2$ is solvable, which is a contradiction.

So the only possible case is  $\mathbf{x}_{A_1}\cdot \mathbf{p}=0$ and $\mathbf{x}_{A_2}\cdot \mathbf{p}=0,$ which by \eqref{eqn1}, gives $N\mathbf{p}=\mathbf{1}$.

Conversely, if $\mathbf{p}$ satisfies the condition $N\mathbf{p}=\mathbf{1}$ with $\mathbf{x}_{A_1}\cdot \mathbf{p}=0$ and $\mathbf{x}_{A_2}\cdot \mathbf{p}=0$ then we get $N^*\mathbf{p}=\mathbf{1}$ by \eqref{eqn1}.

 Note that the above condition on $\mathbf{p}$ is equivalent to say that $\mathbf{p} \in O_1\cap O_2 $, where $O_1$ and $O_2$ are the sets defined as in Lemma \ref{lem4} with $\mathbf{c}=\mathbf{1}$. If we denote the number of odd dominating patterns of $G$ by $n$, then the number of patterns satisfying the condition is $n/4$ by Lemma \ref{lem4}. This is equivalent to say that $\Delta\nu=-2$ since the number of odd dominating patterns of $G$ and $G^*$ are $2^{\nu(G)}$ and $2^{\nu(G^*)}$, respectively.

  Lastly, since $\mathbf{x}_{A_1}\cdot \mathbf{p}=\mathbf{x}_{A_2}\cdot \mathbf{p}=0$ for all odd dominating patterns $\mathbf{p}$ of $G^*$, we conclude that $A_1$ and $A_2$ are $NO$ in $G^*$.

\end{proof}

\begin{prop}
\label{HHA}
Let   $A_1$, $A_2$ be $HO$, and $A_1 \cup A_2$ be $AO$ sets in $G$. Then for any pattern $\mathbf{p}$, $N^*\mathbf{p}=\mathbf{1}\; \text{if and only if}\;  N\mathbf{p}=\overline{\mathbf{x}_{A_1\cup A_2}}$ with $\mathbf{x}_{A_1}\cdot \mathbf{p}=\mathbf{x}_{A_2}\cdot \mathbf{p}=1$. Moreover, $A_1$ and $A_2$ are $AO$ sets in $G^*$ and $\Delta\nu=-1$.
\end{prop}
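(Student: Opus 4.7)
My plan is to follow the template established in Propositions \ref{HHH} and \ref{AH}: first establish the equivalence between $N^*\mathbf{p}=\mathbf{1}$ and the stated condition via a four-way case analysis on the pair $(\mathbf{x}_{A_1}\cdot\mathbf{p},\mathbf{x}_{A_2}\cdot\mathbf{p})$ using \eqref{eqn1}, then count the qualifying patterns using Lemma \ref{lem4}, and finally read off the activation types of $A_1$, $A_2$ in $G^*$.

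For the ``only if'' direction, I would dispatch three of the four cases as contradictions. The case $(0,0)$ yields $N\mathbf{p}=\mathbf{1}$, making $\mathbf{p}$ an odd dominating pattern of $G$ with $\mathbf{x}_{A_1\cup A_2}\cdot\mathbf{p}=0$, which contradicts $A_1\cup A_2$ being $AO$. The mixed cases $(0,1)$ and $(1,0)$ yield $N\mathbf{p}=\overline{\mathbf{x}_{A_1}}$ and $N\mathbf{p}=\overline{\mathbf{x}_{A_2}}$, respectively; Lemma \ref{lemx} would then make $A_1$ or $A_2$ solvable, contradicting that they are $HO$. So only $(1,1)$ survives, and \eqref{eqn1} then gives $N\mathbf{p}=\overline{\mathbf{x}_{A_1\cup A_2}}$. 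The converse is a one-line substitution into \eqref{eqn1}, using $\overline{\mathbf{x}_{A_1\cup A_2}}+\mathbf{x}_{A_1}+\mathbf{x}_{A_2}=\mathbf{1}$.

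The counting step is where the real work lies. My plan is to invoke Lemma \ref{lem4} with $\mathbf{c}=\overline{\mathbf{x}_{A_1\cup A_2}}$, which is solvable by Lemma \ref{lemx} since $A_1\cup A_2$ is $AO$. I then need the activation type of $A_1\cup A_2$ relative to this $\mathbf{c}$: since $\mathbf{x}_{A_1\cup A_2}$ is itself solvable, Lemma \ref{lem2} forces $\mathbf{x}_{A_1\cup A_2}\cdot\mathbf{p}=0$ on every solving pattern $\mathbf{p}$ for $\mathbf{c}$, so $A_1\cup A_2$ is $\mathbf{c}$-$NO$. With $n=2^{\nu(G)}$ solving patterns, Lemma \ref{lem4} then delivers $|I_1\cap I_2|=n/2=2^{\nu(G)-1}$, i.e.\ exactly that many patterns satisfy $N\mathbf{p}=\overline{\mathbf{x}_{A_1\cup A_2}}$ together with $\mathbf{x}_{A_1}\cdot\mathbf{p}=\mathbf{x}_{A_2}\cdot\mathbf{p}=1$. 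Equating this with $2^{\nu(G^*)}$ yields $\Delta\nu=-1$, and since $\mathbf{x}_{A_1}\cdot\mathbf{p}=\mathbf{x}_{A_2}\cdot\mathbf{p}=1$ on every odd dominating pattern of $G^*$, both $A_1$ and $A_2$ are $AO$ in $G^*$.

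The main obstacle I anticipate is choosing the right configuration in Lemma \ref{lem4}: the default choice $\mathbf{c}=\mathbf{1}$ would count the wrong thing, so one must switch to $\mathbf{c}=\overline{\mathbf{x}_{A_1\cup A_2}}$ and then invoke Lemma \ref{lem2} to identify $A_1\cup A_2$ as $\mathbf{c}$-$NO$ so that the correct branch ($n/2$) of Lemma \ref{lem4} kicks in. Once that identification is made, everything else is bookkeeping.
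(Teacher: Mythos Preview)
Your proposal is correct and follows essentially the same approach as the paper's own proof: the same four-case elimination via \eqref{eqn1} (with the mixed cases handled exactly as in Proposition~\ref{HHH}), the same converse substitution, and the same counting argument invoking Lemma~\ref{lem4} with $\mathbf{c}=\overline{\mathbf{x}_{A_1\cup A_2}}$ together with Lemma~\ref{lem2} to identify $A_1\cup A_2$ as $\mathbf{c}$-$NO$ and hence obtain $|I_1\cap I_2|=n/2$. You have correctly anticipated the key step of switching the base configuration in Lemma~\ref{lem4}.
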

\begin{proof}
Let $N^*\mathbf{p}=\mathbf{1}$. Assuming $\mathbf{x}_{A_1}\cdot \mathbf{p}= 0$, $\mathbf{x}_{A_2}\cdot \mathbf{p}=1$ or $\mathbf{x}_{A_1}\cdot \mathbf{p}=1$, $\mathbf{x}_{A_2}\cdot \mathbf{p}=0$ leads us to the contradictions as in the proof of Proposition \ref{HHH}.

Assuming $\mathbf{x}_{A_1}\cdot \mathbf{p}=\mathbf{x}_{A_2}\cdot \mathbf{p}=0$, we obtain $N\mathbf{p}=\mathbf{1}$ by \eqref{eqn1}. This gives $\mathbf{x}_{A_1\cup A_2} \cdot \mathbf{p} =1$ since $A_1\cup A_2$ is $AO$ in $G$. However, by assumption $\mathbf{x}_{A_1\cup A_2} \cdot \mathbf{p} =\mathbf{x}_{A_1} \cdot \mathbf{p} +\mathbf{x}_{A_2} \cdot \mathbf{p} = 0$, which gives us a contradiction.

So the only possible case is  $\mathbf{x}_{A_1}\cdot \mathbf{p}=\mathbf{x}_{A_2}\cdot \mathbf{p}=1,$ which by \eqref{eqn1}, gives $N\mathbf{p}=\overline{\mathbf{x}_{A_1\cup A_2}}$.

Conversely, if $\mathbf{p}$ satisfies the condition $N\mathbf{p}=\overline{\mathbf{x}_{A_1\cup A_2}}$ with  $\mathbf{x}_{A_1}\cdot \mathbf{p}=\mathbf{x}_{A_2}\cdot \mathbf{p}=1$, then $N^*\mathbf{p}=\mathbf{1}$ by \eqref{eqn1}.

Note that the above condition on $\mathbf{p}$ is equivalent to say that $\mathbf{p} \in I_1\cap I_2 $, where $I_1$ and $I_2$ are the sets defined as in Lemma \ref{lem4} with $\mathbf{c}=\overline{\mathbf{x}_{A_1\cup A_2}}$. Note that $A_1\cup A_2$ is $\overline{\mathbf{x}_{A_1\cup A_2}}$-$NO$ by Lemma \ref{lem2}. Therefore, if we denote the number of patterns satisfying $N\mathbf{p}=\overline{\mathbf{x}_{A_1\cup A_2}}$  by $n$, then the number of patterns satisfying the condition is $n/2$ by Lemma \ref{lem4}. This is equivalent to say that $\Delta\nu=-1$ since the number of patterns satisfying $N\mathbf{p}=\overline{\mathbf{x}_{A_1\cup A_2}}$ is $2^{\nu(G)}$  and the number of odd dominating patterns of $G^*$ is $2^{\nu(G^*)}$.

Lastly, since $\mathbf{x}_{A_1}\cdot \mathbf{p}=\mathbf{x}_{A_2}\cdot \mathbf{p}=1$ for all odd dominating patterns $\mathbf{p}$ of $G^*$, we conclude that $A_1$ and $A_2$ are $AO$ in $G^*$.
\end{proof}

\begin{prop}
\label{HHN}
Let   $A_1$, $A_2$ be $HO$, and $A_1 \cup A_2$ be $NO$ sets in $G$. Then for any pattern $\mathbf{p}$, $N^*\mathbf{p}=\mathbf{1}\; \text{if and only if}\; N\mathbf{p}=\mathbf{1}$ with $\mathbf{x}_{A_1}\cdot \mathbf{p}=\mathbf{x}_{A_2}\cdot \mathbf{p}=0$ or $N\mathbf{p}=\overline{\mathbf{x}_{A_1\cup A_2}}$ with $\mathbf{x}_{A_1}\cdot \mathbf{p}=\mathbf{x}_{A_2}\cdot \mathbf{p}=1$. Moreover, $A_1$ and $A_2$ are $HO$ sets in $G^*$ and $\Delta\nu=0$.
\end{prop}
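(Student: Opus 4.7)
My plan is to follow the template set by Propositions \ref{HHH} and \ref{HHA}, doing a four-case analysis on the values of $\mathbf{x}_{A_1}\cdot\mathbf{p}$ and $\mathbf{x}_{A_2}\cdot\mathbf{p}$. Assuming $N^*\mathbf{p}=\mathbf{1}$, equation \eqref{eqn1} tells me $N\mathbf{p}$ equals one of $\mathbf{1}$, $\overline{\mathbf{x}_{A_1}}$, $\overline{\mathbf{x}_{A_2}}$, $\overline{\mathbf{x}_{A_1\cup A_2}}$ depending on which case holds. The mixed cases $(0,1)$ and $(1,0)$ force $N\mathbf{p}=\overline{\mathbf{x}_{A_1}}$ or $\overline{\mathbf{x}_{A_2}}$, and applying Lemma \ref{lemx} would then make $A_1$ or $A_2$ solvable, contradicting the $HO$ hypothesis; this is exactly the ruling-out step used in Proposition \ref{HHH}. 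The difference from \ref{HHH} is that both remaining cases now survive: in $(0,0)$ we get $N\mathbf{p}=\mathbf{1}$, and in $(1,1)$ we get $N\mathbf{p}=\overline{\mathbf{x}_{A_1\cup A_2}}$, which is solvable by Lemma \ref{lemx} since $A_1\cup A_2$ is $NO$, hence solvable.

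The converse is a direct plug-in into \eqref{eqn1}: if $N\mathbf{p}=\mathbf{1}$ with both dot products zero, then $N^*\mathbf{p}=\mathbf{1}$ trivially; if $N\mathbf{p}=\overline{\mathbf{x}_{A_1\cup A_2}}$ with both dot products one, then $N^*\mathbf{p}=\overline{\mathbf{x}_{A_1\cup A_2}}+\mathbf{x}_{A_2}+\mathbf{x}_{A_1}=\overline{\mathbf{x}_{A_1\cup A_2}}+\mathbf{x}_{A_1\cup A_2}=\mathbf{1}$.

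The counting step is where the real work lives, and I would handle it by invoking Lemma \ref{lem4} twice, once for each configuration. With $\mathbf{c}=\mathbf{1}$, the hypothesis that $A_1\cup A_2$ is $NO$ gives $|O_1\cap O_2|=2^{\nu(G)}/2$, so exactly $2^{\nu(G)-1}$ odd dominating patterns of $G$ satisfy $\mathbf{x}_{A_1}\cdot\mathbf{p}=\mathbf{x}_{A_2}\cdot\mathbf{p}=0$. For the second case, I apply Lemma \ref{lem4} with $\mathbf{c}=\overline{\mathbf{x}_{A_1\cup A_2}}$: the hypothesis that $A_1$ and $A_2$ are $HO$ still applies (it is a graph-only property), and by Lemma \ref{lem2}, $A_1\cup A_2$ is $\overline{\mathbf{x}_{A_1\cup A_2}}$-$NO$, so again $|O_1\cap O_2|=|I_1\cap I_2|=2^{\nu(G)}/2=2^{\nu(G)-1}$. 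Summing the two contributions gives $2^{\nu(G)}$ odd dominating patterns of $G^*$, yielding $\Delta\nu=0$.

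The main subtlety I expect is verifying the second application of Lemma \ref{lem4}, specifically recognizing that $A_1\cup A_2$ being $\overline{\mathbf{x}_{A_1\cup A_2}}$-$NO$ is immediate from Lemma \ref{lem2} rather than requiring any extra case analysis. Once both counts are in hand, the $HO$-conclusion for $A_1$ and $A_2$ in $G^*$ drops out automatically: exactly half of the odd dominating patterns of $G^*$ (those from the first case) have $\mathbf{x}_{A_1}\cdot\mathbf{p}=0$ and the other half (from the second case) have $\mathbf{x}_{A_1}\cdot\mathbf{p}=1$, so $A_1$ is $HO$ in $G^*$ by Proposition \ref{prop2}, and symmetrically for $A_2$.
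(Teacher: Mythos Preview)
Your proposal is correct and follows essentially the same approach as the paper's proof: the same four-case analysis via \eqref{eqn1} with the mixed cases eliminated as in Proposition~\ref{HHH}, the same direct converse, and the same two applications of Lemma~\ref{lem4} (with $\mathbf{c}=\mathbf{1}$ and $\mathbf{c}=\overline{\mathbf{x}_{A_1\cup A_2}}$, the latter using Lemma~\ref{lem2} to establish the $\overline{\mathbf{x}_{A_1\cup A_2}}$-$NO$ property). Your identification of the ``main subtlety'' is exactly the step the paper handles, and your derivation of the $HO$ conclusion in $G^*$ matches the paper's as well.
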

\begin{proof}
Let $N^*\mathbf{p}=\mathbf{1}$. Assuming $\mathbf{x}_{A_1}\cdot \mathbf{p}= 0$, $\mathbf{x}_{A_2}\cdot \mathbf{p}=1$ or $\mathbf{x}_{A_1}\cdot \mathbf{p}=1$, $\mathbf{x}_{A_2}\cdot \mathbf{p}=0$ leads us to the contradictions as in the proof of Proposition \ref{HHH}. Therefore, either $\mathbf{x}_{A_1}\cdot \mathbf{p}=\mathbf{x}_{A_2}\cdot \mathbf{p}=0$ or  $\mathbf{x}_{A_1}\cdot \mathbf{p}=\mathbf{x}_{A_2}\cdot \mathbf{p}=1$, which, by \eqref{eqn1}, gives $N\mathbf{p}=\mathbf{1}$ or $N\mathbf{p}=\overline{\mathbf{x}_{A_1\cup A_2}}$, respectively.

Conversely, if $\mathbf{p}$ satisfies the conditions $N\mathbf{p}=\mathbf{1}$ with $\mathbf{x}_{A_1}\cdot \mathbf{p}=\mathbf{x}_{A_2}\cdot \mathbf{p}=0$ or $N\mathbf{p}=\overline{\mathbf{x}_{A_1\cup A_2}}$ with  $\mathbf{x}_{A_1}\cdot \mathbf{p}=\mathbf{x}_{A_2}\cdot \mathbf{p}=1$, then in both cases $N^*\mathbf{p}=\mathbf{1}$ by \eqref{eqn1}.

 Note that the first condition on $\mathbf{p}$ is equivalent to say that $\mathbf{p} \in O_1\cap O_2 $, where $O_1$ and $O_2$ are the sets defined as in Lemma \ref{lem4} with $\mathbf{c}=\mathbf{1}$. $A_1 \cup A_2$ is a $NO$ set by assumption. So if we denote the number of odd dominating patterns of $G$ by $n$, then the number of patterns satisfying the first condition is $n/2$ by Lemma \ref{lem4}. The second condition on $\mathbf{p}$ is equivalent to say that $\mathbf{p} \in I_1\cap I_2 $, where $I_1$ and $I_2$ are the sets defined as in Lemma \ref{lem4} with $\mathbf{c}=\overline{\mathbf{x}_{A_1\cup A_2}}$. Note that $A_1\cup A_2$ is $\overline{\mathbf{x}_{A_1\cup A_2}}$-$NO$ by Lemma \ref{lem2}. Also, the number of the solving patterns for $\overline{\mathbf{x}_{A_1\cup A_2}}$ is equal to number of odd dominating patterns of $G$, which is $n$. Hence the number of patterns satisfying the second condition is $n/2$ by Lemma \ref{lem4}. So in total, there are $n$ patterns satisfying the first or second condition. Therefore, the number of odd dominating patterns of $G$, which is  $2^{\nu(G)}$ is equal to the number of odd dominating patterns of $G^*$, which is  $2^{\nu(G^*)}$. Hence $\Delta\nu=0$.

 We see that odd dominating patterns $\mathbf{p}$ of $G^*$ satisfy the first or second condition. In the first condition $\mathbf{x}_{A_1}\cdot \mathbf{p}=\mathbf{x}_{A_2}\cdot \mathbf{p}=0$ while in the second condition $\mathbf{x}_{A_1}\cdot \mathbf{p}=\mathbf{x}_{A_2}\cdot \mathbf{p}=1$ which makes $A_1$ and $A_2$  $HO$ sets in $G^*$.
\end{proof}

We summarize the results obtained from Proposition \ref{NN0} to \ref{HHN} in the following table.
$$$$
$$$$
$$$$
$$$$
$$$$
$$$$
$$$$
$$$$
\begin{table}[h]
\label{table1}
\caption{} 
\centering 
\begin{tabular}{|c|c|c|c|c|c|c|c|} 
\hline\hline 
\multirow{2}{*}{$A_1$ in $G$} & \multirow{2}{*}{$A_2$ in $G$} & \multirow{2}{*}{$A_1$ in $G^*$} & \multirow{2}{*}{$A_2$ in $G^*$} &  \multirow{2}{*}{$\Delta\nu$} & \multicolumn{2}{c|}{when}\\
\cline{6-7}
&  &  &  &  & $A_2$ in $G$ & \small $A_1 \cup A_2$ in $G$ \\
\hline 
\multirow{2}{*}{$NO$} & \multirow{2}{*}{$NO$}  & $NO$ & $NO$ & 0 & $\overline{\mathbf{x}_{A_1}}$-$NO$ & - \\ 
\cline{3-7}
       &  & $HO$ & $HO$ & 2 & $\overline{\mathbf{x}_{A_1}}$-$AO$ & - \\ 
\hline
\multirow{2}{*}{$NO$} & \multirow{2}{*}{$AO$}  & $AO$ & $HO$ & 1 & $\overline{\mathbf{x}_{A_1}}$-$NO$ & - \\ 
\cline{3-7}
       &  & $NO$ & $AO$ & 0 & $\overline{\mathbf{x}_{A_1}}$-$AO$ & - \\ 
\hline
\multirow{2}{*}{$AO$} & \multirow{2}{*}{$AO$}  & $AO$ & $AO$ & 0 & $\overline{\mathbf{x}_{A_1}}$-$NO$ & - \\ 
\cline{3-7}
       &  & $HO$ & $HO$ & 1 & $\overline{\mathbf{x}_{A_1}}$-$AO$ & - \\ 
\hline
\multirow{3}{*}{$HO$} & \multirow{3}{*}{$HO$}  & $NO$ & $NO$ & -2 & - & $HO$ \\ 
\cline{3-7}
       &  & $AO$ & $AO$ & -1 & - & $AO$ \\
\cline{3-7}
&  & $HO$ & $HO$ & 0 & - & $NO$ \\
\hline
$NO$ & $HO$ & $NO$ & $HO$ & $0$ & - & -     \\[0ex]
\hline
$AO$ & $HO$ & $NO$ & $AO$ & -1 & - & -     \\[0ex]
\hline

\end{tabular}
\label{table1}
\end{table}

\section{Main Results}
In this section we investigate the existence of edges whose addition to or removal from a graph increases or decreases the nullity of the graph under some specific conditions. The first natural question we ask is the following.
For a graph with positive nullity should there exist an edge whose removal decreases the nullity of the graph? As we proved in the following theorem the answer is affirmative.
\begin{thm}
\label{thmr}
Suppose $G$ is a graph with positive nullity. Then there exists an edge of $G$ whose removal decreases the nullity.
\end{thm}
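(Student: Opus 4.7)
The plan is to exhibit an edge whose removal triggers one of Propositions \ref{HHH}, \ref{HHA}, or \ref{AH}, as these are precisely the single-edge cases ($A_1=\{u\}$, $A_2=\{v\}$ with $u\sim v$) producing $\Delta\nu<0$. Since $\nu(G)>0$, I start by fixing a non-zero null pattern $\boldsymbol{\ell}$ with support $V_\ell$. By Remark \ref{rem}, every vertex in $V_\ell$ is $HO$. Moreover, for $v\in V_\ell$, $(N\boldsymbol{\ell})(v)=|N[v]\cap V_\ell|\equiv 0\pmod 2$, and since $v\in N[v]\cap V_\ell$, the vertex $v$ has an odd (in particular positive) number of neighbors in $V_\ell$; hence $G[V_\ell]$ contains at least one edge.

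First I would try any edge $uv$ with $u,v\in V_\ell$. Both endpoints are $HO$, so exactly one of Propositions \ref{HHH}, \ref{HHA}, \ref{HHN} applies, depending on whether $\{u,v\}$ is $HO$, $AO$, or $NO$. If $\{u,v\}$ is $HO$ (resp.\ $AO$) we are done with $\Delta\nu=-2$ (resp.\ $\Delta\nu=-1$). The difficult case is when $\{u,v\}$ is $NO$ for \emph{every} edge of $G[V_\ell]$.

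In that case, unpacking the $NO$ condition (solvable plus $\mathbf{1}$-never odd activated) along each edge of $G[V_\ell]$ gives $\boldsymbol{\ell}'(u)=\boldsymbol{\ell}'(v)$ for every null pattern $\boldsymbol{\ell}'$ and $\mathbf{s}(u)=\mathbf{s}(v)$ for every odd dominating pattern $\mathbf{s}$; hence both $\boldsymbol{\ell}'$ and $\mathbf{s}$ are constant on each connected component of $G[V_\ell]$. Fix such a component $C$ and a vertex $w\in C$, and let $\alpha\in\{0,1\}$ denote the common value of $\mathbf{s}$ on $C$. Since $w$'s $V_\ell$-neighbors all lie in $C$, we have $N[w]\cap V_\ell=N[w]\cap C$, and this set has even cardinality because $\boldsymbol{\ell}=\mathbf{x}_{V_\ell}$ is null. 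Splitting $N[w]=(N[w]\cap C)\sqcup(N[w]\setminus V_\ell)$, the identity $(N\mathbf{s})(w)=1$ becomes
\[
\alpha\,|N[w]\cap C|+\sum_{w'\in N[w]\setminus V_\ell}\mathbf{s}(w')=1,
\]
the first term vanishes, and so some $w'\in N[w]\setminus V_\ell$ satisfies $\mathbf{s}(w')=1$. In particular $w'$ is not $NO$.

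Finally, I split on the type of $w'$. If $w'$ is $AO$, Proposition \ref{AH} applied to the edge $ww'$ (with $A_1=\{w'\}$, $A_2=\{w\}$) gives $\Delta\nu=-1$. If $w'$ is $HO$, then $\boldsymbol{\ell}(w)=1$ and $\boldsymbol{\ell}(w')=0$ yield $\mathbf{x}_{\{w,w'\}}\cdot\boldsymbol{\ell}=1$, so $\{w,w'\}$ is not solvable and hence $HO$, and Proposition \ref{HHH} gives $\Delta\nu=-2$. The main obstacle is precisely the $NO$-on-every-edge-of-$G[V_\ell]$ subcase, where the trick is to extract that both $\boldsymbol{\ell}'$ and $\mathbf{s}$ must be constant on components of $G[V_\ell]$ and to combine this with the odd-degree property of $V_\ell$ in $G[V_\ell]$ to locate a usable boundary edge.
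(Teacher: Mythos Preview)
Your argument is correct, but it takes a different route from the paper's. The paper proceeds by a short contradiction: pick any half activated vertex $u$, choose an odd dominating pattern $\mathbf{s}$ with $\mathbf{s}(u)=0$ (possible since $u$ is $HO$), and show that if no edge removal decreases the nullity then $\mathbf{s}$ must vanish on all of $N[u]$---half activated neighbors $v$ would otherwise give $\{u,v\}$ of type $HO$ or $AO$, always activated neighbors are excluded by Proposition~\ref{AH}, and never activated neighbors satisfy $\mathbf{s}=0$ automatically---contradicting $\mathbf{x}_{N[u]}\cdot\mathbf{s}=1$.

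Your approach instead fixes a null pattern $\boldsymbol{\ell}$, works inside the induced subgraph $G[V_\ell]$, and uses the component structure there together with the parity of $|N[w]\cap V_\ell|$ to locate either a good internal edge or a good boundary edge. This is more constructive---it essentially tells you where to look for the edge---at the cost of a longer case analysis and the auxiliary component argument (the observation that $\boldsymbol{\ell}'$ is constant on components is derived but never used; only the constancy of $\mathbf{s}$ matters). The paper's proof is slicker because the single clever choice $\mathbf{s}(u)=0$ collapses your component-wise bookkeeping into a one-line neighborhood computation.
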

\begin{proof}
Suppose for a contradiction that there does not exist such an edge. Since $\nu(G)>0$, there exists a nonzero null pattern. Hence there is a half activated vertex $u$ of $G$ (see Remark \ref{rem}). Let $\mathbf{s}$ be an odd dominating pattern of $G$ with $\mathbf{s}(u)=0$. For any half activated neighbor $v$ of $u$, $\mathbf{s}(v)=0$. Otherwise the set $\{u,v\}$ would be either $HO$ or $AO$, in which case the removal of the edge $uv$ decreases the nullity by Proposition \ref{HHH} and \ref{HHA}. On the other hand $u$ does not have any always activated neighbor. Otherwise, removal of the edge between $u$ and that activated vertex would decrease the nullity by Proposition \ref{AH}. Hence  $\mathbf{s}(w)=0$ for all neighbors $w$ of $u$. Then $\mathbf{x}_{N[u]}\cdot \mathbf{s}=0$, which contradicts with $\mathbf{s}$ being an odd dominating pattern.
\end{proof}
As a second question, we can ask the converse of the first one. For a graph $G$ which is not complete, does there always exist a pair of non adjacent vertices $u$ and $v$ such that the addition of the edge $uv$ to $G$ increases the nullity? The answer is an immediate no. Because, for example, any edge addition to the graph $K_2 \cup K_2$ only decreases the nullity. But what if the nullity of $G$ cannot decrease, i.e.; what if $G$ is always solvable? We can see that the answer is still no by considering the cycle $C_5$. Cycle $C_5$ is always solvable, on the other hand adding any edge to $C_5$ keeps the nullity zero. Note that $C_5$ satisfies some special properties such as having an odd order and being an \emph{even graph} i.e.; the degrees of all of its vertices are even. It turns out that if the nullity of an always solvable graph $G$ does not increase by any edge addition  then $G$ must satisfy these two properties as we prove in Theorem \ref{thma}. Before starting the proof we need the following definitions and lemma.
\begin{defn}
Let $G$ be a graph with vertices $u$ and $v$. If $u$ and $v$ are not adjacent then we denote by $G+uv$ the graph obtained from $G$ by joining $u$ and $v$ by an edge. If $u$ and $v$ are adjacent then we denote by $G-uv$ the graph obtained from $G$ by removing the edge between $u$ and $v$.
\end{defn}
\begin{defn}
We define the \emph{parity value} $pr(\mathbf{x})$ of a vector $\mathbf{x} \in \mathbb{Z}_2^n$ as $pr(\mathbf{x}):= \mathbf{1}\cdot\mathbf{x}$.
\end{defn}
Hence the parity value of a vector is $0$ if it has even number of nonzero coordinates and  $1$ if it has odd number of nonzero coordinates.

\begin{lem}
\label{lem6}
Let $G$ be an always solvable graph, $u$ be a vertex of $G$, $\mathbf{p}$ be the solving pattern for $\overline{\mathbf{x}_{\{u\}}}$, and $\mathbf{s}$ be the odd dominating pattern of $G$. Then,
\begin{align}
\label{Ns1}
\overline{\mathbf{x}_{N[u]}}\cdot \mathbf{p}= \left\{ \begin{array}{cc}
                pr(\mathbf{s}) &  \text{if}\;\;u\;\text{is never activated} \\
                1-pr(\mathbf{s}) &  \text{if}\;\;u\;\text{is always activated} \\
                \end{array} \nonumber \right\}.
\end{align}
\end{lem}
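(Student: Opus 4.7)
The plan is to compute $\overline{\mathbf{x}_{N[u]}}\cdot\mathbf{p}$ by splitting $\overline{\mathbf{x}_{N[u]}}=\mathbf{1}+\mathbf{x}_{N[u]}$ and then relating the resulting quantities to $pr(\mathbf{s})$ by evaluating the bilinear form $\mathbf{p}^tN\mathbf{s}$ in two different ways. Note that because $G$ is always solvable, $\nu(G)=0$, so $\mathbf{p}$ and $\mathbf{s}$ are uniquely determined by $N\mathbf{p}=\overline{\mathbf{x}_{\{u\}}}$ and $N\mathbf{s}=\mathbf{1}$ respectively, which makes the statement well-posed.

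First I would observe that $\mathbf{x}_{N[u]}$ is precisely the $u$-th column (equivalently row) of $N$, so by symmetry of $N$ the quantity $\mathbf{x}_{N[u]}\cdot\mathbf{p}$ is exactly the $u$-th coordinate of $N\mathbf{p}$. Since $N\mathbf{p}=\overline{\mathbf{x}_{\{u\}}}$, this coordinate equals $\mathbf{1}(u)+\mathbf{x}_{\{u\}}(u)=1+1=0$. Consequently
$$\overline{\mathbf{x}_{N[u]}}\cdot\mathbf{p} \;=\; \mathbf{1}\cdot\mathbf{p}+\mathbf{x}_{N[u]}\cdot\mathbf{p} \;=\; pr(\mathbf{p}).$$
So the task reduces to showing $pr(\mathbf{p})=pr(\mathbf{s})+\mathbf{s}(u)$ over $\mathbb{Z}_2$.

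For this, I would evaluate $\mathbf{p}^tN\mathbf{s}$ in two ways. Using $N\mathbf{s}=\mathbf{1}$, one obtains $\mathbf{p}^tN\mathbf{s}=\mathbf{p}\cdot\mathbf{1}=pr(\mathbf{p})$. Using symmetry of $N$ to transpose, instead, $\mathbf{p}^tN\mathbf{s}=(N\mathbf{p})^t\mathbf{s}=\overline{\mathbf{x}_{\{u\}}}\cdot\mathbf{s}=pr(\mathbf{s})+\mathbf{s}(u)$. Equating gives the desired identity $pr(\mathbf{p})=pr(\mathbf{s})+\mathbf{s}(u)$.

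Finally I would invoke the characterization of the activation status of $u$ in an always solvable graph: by Remark \ref{rem} together with Lemma \ref{prop1}, $u$ is never activated precisely when $\mathbf{s}(u)=0$, and always activated precisely when $\mathbf{s}(u)=1$ (these are the only options because $\nu(G)=0$ rules out $\{u\}$ being $HO$). Substituting each case into $pr(\mathbf{p})=pr(\mathbf{s})+\mathbf{s}(u)$ yields $pr(\mathbf{p})=pr(\mathbf{s})$ or $pr(\mathbf{p})=1-pr(\mathbf{s})$, matching the two cases of the lemma. The only mildly delicate step is the identification $\mathbf{x}_{N[u]}\cdot\mathbf{p}=(N\mathbf{p})(u)$, which relies on $\mathbf{x}_{N[u]}$ being both a column and a row of the symmetric matrix $N$; everything else is routine bookkeeping over $\mathbb{Z}_2$.
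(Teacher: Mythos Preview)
Your proof is correct. Both you and the paper reduce the claim to two facts: (i) $\overline{\mathbf{x}_{N[u]}}\cdot\mathbf{p}=pr(\mathbf{p})$, proved identically in both via $\mathbf{x}_{N[u]}\cdot\mathbf{p}=(N\mathbf{p})(u)=0$; and (ii) $pr(\mathbf{p})=pr(\mathbf{s})+\mathbf{s}(u)$. The paper obtains (ii) by introducing the auxiliary vector $\mathbf{r}=N^{-1}\mathbf{x}_{\{u\}}$, noting that $\mathbf{r}$ is the $u$-th column of $N^{-1}$ while $\mathbf{s}(u)$ is the parity of the $u$-th row of $N^{-1}$, and invoking symmetry of $N^{-1}$ to get $pr(\mathbf{r})=\mathbf{s}(u)$, whence $pr(\mathbf{p})=pr(\mathbf{r})+pr(\mathbf{s})$. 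Your route, evaluating $\mathbf{p}^tN\mathbf{s}$ in two ways using symmetry of $N$ itself, is a mild but genuine simplification: it avoids the auxiliary $\mathbf{r}$ and any explicit mention of $N^{-1}$, and works verbatim without fixing an enumeration making $u$ the first vertex. The paper's argument, on the other hand, makes the source of the identity (columns versus rows of the inverse) somewhat more visible. Either way the substance is the same.
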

\begin{proof}
First we take an enumeration of vertices of $G$ such that $u$ corresponds to the first vertex. Now let $\mathbf{r}$ be the solving pattern for $\mathbf{x}_{\{u\}}$. Hence, $\mathbf{r}=N^{-1}\mathbf{x}_{\{u\}}$ which implies $\mathbf{r}$ is the first column vector of $N^{-1}.$ On the other hand, $\mathbf{s}=N^{-1}\mathbf{1}$, which implies $\mathbf{s}(u)$ is equal to the parity value of the first row vector of $N^{-1}.$ Since $N^{-1}$ is a symmetric matrix these two observations imply $pr(\mathbf{r})=\mathbf{s}(u)$. Furthermore, note that $\mathbf{p}=\mathbf{r}+\mathbf{s}$. Indeed, $\mathbf{p}=N^{-1}\overline{\mathbf{x}_{\{u\}}}=N^{-1}(\mathbf{x}_{\{u\}}+\mathbf{1})=N^{-1}\mathbf{x}_{\{u\}}+N^{-1}\mathbf{1}=\mathbf{r}+\mathbf{s}$. Hence $pr(\mathbf{p})=pr(\mathbf{r})+pr(\mathbf{s})= \mathbf{s}(u)+pr(\mathbf{s}).$ So $pr(\mathbf{p})=pr(\mathbf{s})$ if $u$ is never activated and $pr(\mathbf{p})=1+pr(\mathbf{s})=1-pr(\mathbf{s})$ if $u$ is always activated. On the other hand, $pr(\mathbf{p})= \mathbf{1}\cdot\mathbf{p} = \mathbf{x}_{N[u]}\cdot \mathbf{p} + \overline{\mathbf{x}_{N[u]}}\cdot \mathbf{p}= \overline{\mathbf{x}_{N[u]}}\cdot \mathbf{p}$ since $\mathbf{x}_{N[u]}\cdot \mathbf{p} =(N\mathbf{p})(u)=\overline{\mathbf{x}_{\{u\}}}(u)=0.$  Hence the result follows.
\end{proof}

\begin{thm}
\label{thma}
Suppose $G$ is an always solvable graph which is not an even graph with odd order. Then, there exists a non-adjacent pair of vertices $u$, $v$  in $G$ such that $G+uv$ is not always solvable.
\end{thm}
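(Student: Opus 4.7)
The plan is to argue by contradiction: assume that for every non-adjacent pair $u,v$ in $G$ the graph $G+uv$ is again always solvable, i.e.\ $\Delta\nu=0$, and derive that $G$ must be even of odd order.

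First I would translate the hypothesis into a clean pointwise identity on the unique solving patterns. Since $\nu(G)=0$, every configuration has a unique solving pattern, and in particular $v$ is $AO$ iff $\mathbf{s}(v)=1$ and $NO$ iff $\mathbf{s}(v)=0$, where $\mathbf{s}$ is the (unique) odd dominating pattern. For each $u$, let $\mathbf{p}_u$ denote the unique solving pattern of $\overline{\mathbf{x}_{\{u\}}}$; observe that $\{v\}$ is $\overline{\mathbf{x}_{\{u\}}}$-$NO$ iff $\mathbf{p}_u(v)=0$ and $\overline{\mathbf{x}_{\{u\}}}$-$AO$ iff $\mathbf{p}_u(v)=1$. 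Inspecting the three rows of Table \ref{table1} with $A_1=\{u\}$, $A_2=\{v\}$ that give $\Delta\nu=0$ (from Propositions \ref{NN0}, \ref{NA1}, \ref{AA0}), the assumption becomes: for every $v\notin N[u]$,
\[
\mathbf{p}_u(v)=\mathbf{s}(u)+\mathbf{s}(v) \pmod 2.
\]

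Next I would sum this identity over all $v\notin N[u]$ to compute $\overline{\mathbf{x}_{N[u]}}\cdot\mathbf{p}_u$ and match it with the value forced by Lemma \ref{lem6}. Using $|V\setminus N[u]|=n-1-\deg(u)$ together with the key cancellation $\mathbf{x}_{N[u]}\cdot\mathbf{s}=(N\mathbf{s})(u)=1$, which gives $\overline{\mathbf{x}_{N[u]}}\cdot\mathbf{s}=pr(\mathbf{s})+1$, one obtains
\[
\overline{\mathbf{x}_{N[u]}}\cdot\mathbf{p}_u=(n-1-\deg(u))\mathbf{s}(u)+pr(\mathbf{s})+1 \pmod 2.
\]

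Now I would split into the two cases of Lemma \ref{lem6}. If some vertex $u$ is $NO$, then $\mathbf{s}(u)=0$ and the expression collapses to $pr(\mathbf{s})+1$, while Lemma \ref{lem6} demands $pr(\mathbf{s})$; this is already a contradiction. Hence every vertex is $AO$, forcing $\mathbf{s}=\mathbf{1}$ and therefore $N\mathbf{1}=\mathbf{1}$, which means $\deg(v)$ is even for every $v$, i.e.\ $G$ is an even graph. In the remaining case $\mathbf{s}(u)=1$ for every $u$, and the expression becomes $n+\deg(u)+pr(\mathbf{s})\pmod 2$, while Lemma \ref{lem6} demands $1+pr(\mathbf{s})$; hence $n+\deg(u)$ is odd for every $u$. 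Since each $\deg(u)$ is even, $n$ must be odd, and $G$ is even of odd order, contradicting the hypothesis.

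The only genuinely delicate step is the first one: correctly reading off from Table \ref{table1} that the single identity $\mathbf{p}_u(v)=\mathbf{s}(u)+\mathbf{s}(v)$ captures exactly the three $\Delta\nu=0$ rows for single-vertex sets. Once that identity is in place, the rest is a straightforward parity computation paired with Lemma \ref{lem6}. (The extremal case $G=K_1$ is harmless since it is itself even of odd order, so the hypothesis of the theorem excludes it; for any other always solvable $G$ the graph is not complete and non-adjacent pairs exist.)
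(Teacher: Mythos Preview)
Your plan is essentially the paper's own proof: both pick a vertex $u$, use the edge-addition propositions to pin down $\mathbf{p}_u(v)$ for every $v\notin N[u]$, compute $\overline{\mathbf{x}_{N[u]}}\cdot\mathbf{p}_u$, and confront the answer with Lemma~\ref{lem6}; the paper's two cases ``$G$ not even'' and ``$G$ even of even order'' are exactly your cases ``some $u$ is $NO$'' and ``every $u$ is $AO$''.

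One point to tighten: the blanket identity $\mathbf{p}_u(v)=\mathbf{s}(u)+\mathbf{s}(v)$ is \emph{not} true in the fourth combination $u$ $AO$, $v$ $NO$. Reading Proposition~\ref{NA1} with $A_1=\{v\}$, $A_2=\{u\}$ and translating via Lemma~\ref{lem3} (or directly via $\mathbf{p}_u(v)+\mathbf{p}_v(u)=\mathbf{s}(u)+\mathbf{s}(v)$), the $\Delta\nu=0$ condition there is $\mathbf{p}_u(v)=0$, not $1$. Your argument is unaffected, because you only invoke the summed formula when either $u$ is $NO$ (so every $v$ falls under Propositions~\ref{NN0} or~\ref{NA1}) or after you have already forced every vertex to be $AO$ (so every $v$ falls under Proposition~\ref{AA0}); in both situations the bad case never occurs. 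Just restate the identity with that restriction rather than ``for every $v\notin N[u]$''.
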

\begin{proof}
First of all note that an always solvable graph has no half activated vertices. So all vertices are either always activated or never activated. Now assume for a contradiction that for all non-adjacent pair of vertices $u$ and $v$,  $G+uv$ is always solvable. Since $G$ is not an even graph with odd order, either $G$ is not an even graph or $G$ is an even graph with even order.

Assume first that $G$ is not an even graph. Then $\mathbf{1}$ cannot be the odd dominating pattern. Hence there exists a never activated vertex $u$ of $G$. Let $\mathbf{p}$ be the solving pattern for $\overline{\mathbf{x}_{\{u\}}}$. Let $v$ be another vertex in $G$ which is not adjacent to $u$. Since $\nu(G+uv)=\nu(G)=0$, by Proposition \ref{NN1} and \ref{NA0}, $\mathbf{p}(v)= \mathbf{x}_{\{v\}}\cdot \mathbf{p}  =0$ if $v$ is never activated and $\mathbf{p}(v)=1$ if $v$ is always activated. Hence $\overline{\mathbf{x}_{N[u]}}\cdot \mathbf{p}$ is equal to the parity value of the always activated vertices which are not adjacent to $u$. Note that a never activated vertex has odd number of always activated neighbors in an always solvable graph. Hence the parity value of the always activated vertices which are not adjacent to $u$ is equal to $1-pr(\mathbf{s})$, where $\mathbf{s}$ is the odd dominating pattern of $G$. On the other hand, we have $\overline{\mathbf{x}_{N[u]}}\cdot \mathbf{p}=pr(\mathbf{s})$ by Lemma \ref{lem6} since $u$ is a never activated vertex. So we arrive at a contradiction.

Assume second that $G$ is an even graph with even order. Then $\mathbf{1}$ is the odd dominating pattern of $G$, in other words every vertex is always activated. Let $\tilde{u}$ be one of these vertices. Let $\tilde{v}$ be a vertex in $G$ which is not adjacent to $\tilde{u}$. Then, since $\tilde{v}$ is also always activated and $\nu(G+\tilde{u}\tilde{v})=\nu(G)=0$, by Proposition \ref{AA1}, $\mathbf{\tilde{p}}(\tilde{v})=0$, where $\mathbf{\tilde{p}}$ is the solving pattern for $\overline{\mathbf{x}_{\{\tilde{u}\}}}$. Hence  $\overline{\mathbf{x}_{N[\tilde{u}]}}\cdot \mathbf{\tilde{p}}=0$. However, by Lemma \ref{lem6}, $\overline{\mathbf{x}_{N[\tilde{u}]}}\cdot \mathbf{\tilde{p}}=1-pr(\mathbf{s})=1$ since the order of $G$ is even and every vertex is always activated. So we arrive at a contradiction for this case as well.
\end{proof}

Note that the converse of Theorem  \ref{thma} does not hold since the empty graph $\overline{K_3}$ is an even graph with odd order but every edge addition to it increases the nullity.

Theorem \ref{thma} answers the question of existence of an edge addition which increases the nullity of an always solvable graph. Similarly, we can ask whether there exists an edge removal which increases the nullity of an always solvable graph. The answer may be no if the graph is even. For example, $C_4$ is always solvable and any edge removal keeps the nullity zero. But if the graph is not even, then the answer is yes as the following theorem states.

\begin{thm}
\label{thmsss}
Suppose $G$ is an always solvable graph which is not an even graph. Then, there exists an edge $e$ of $G$ such that $G-e$ is not always solvable.
\end{thm}
\begin{proof}
Assume for a contradiction that there does not exist any edge whose removal increases the nullity. Since $G$ is not an even graph, $\mathbf{1}$ is not the odd dominating pattern. Hence there exists a never activated vertex $u$ of $G$. Let $\mathbf{p}$ be the solving pattern for $\overline{\mathbf{x}_{\{u\}}}$. Let $v$ be a neighbor of $u$ in $G$. By assumption,  $\nu(G-uv)=\nu(G)=0$. Hence, by Proposition \ref{NN1} and \ref{NA0}, $\mathbf{p}(v)= \mathbf{x}_{\{v\}}\cdot \mathbf{p}  =0$ if $v$ is never activated and $\mathbf{p}(v)=1$ if $v$ is always activated. On the other hand, $\mathbf{p}(u)=\mathbf{x}_{\{u\}}\cdot \mathbf{p}=0$ by Lemma \ref{lem2}. Moreover, $u$ must have odd number of always activated neighbors. Therefore, $\mathbf{x}_{N[u]}\cdot \mathbf{p}=1$. However, $\mathbf{x}_{N[u]}\cdot \mathbf{p}=(N\mathbf{p})(u)=\overline{\mathbf{x}_{\{u\}}}(u)=0$, which is a contradiction.
\end{proof}

\section{Further Results}

Theorem \ref{thmr} tells us that for a graph with positive nullity, there exists an edge whose removal decreases the nullity. However it does not tell us how much it decreases the nullity. From Table \ref{table1} we see that nullity can decrease by $1$ or by $2$. So another natural question is that for graphs with nullity greater than $1$ can we always find an edge whose removal decreases the nullity by $2$. The answer of this question is negative since $K_2 \cup K_2$ has nullity $2$ but removal of any edge decreases the nullity only by $1$. However, note that although we cannot find an edge of $K_2 \cup K_2$ whose removal decreases the nullity by $2$, there exist edges whose addition decreases the nullity by $2$. The following theorem proves that this is not a coincidence.

\begin{thm}
Suppose $G$ is a graph with $\nu(G)\geq 2$. Then either there exists an edge $e$ of $G$ such that $\nu(G-e)=\nu(G)-2$ or there are non-adjacent vertices $u$, $v$ of $G$ such that $\nu(G+uv)=\nu(G)-2$.
\end{thm}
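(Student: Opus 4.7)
My plan is to reduce the whole statement to a single application of Proposition \ref{HHH} with two singleton sets. Inspecting Table \ref{table1}, the only row producing $\Delta\nu=-2$ is the one handled by Proposition \ref{HHH}, in which $A_1$, $A_2$, and $A_1\cup A_2$ are all $HO$. If I can exhibit two distinct vertices $u,v$ such that $\{u\}$, $\{v\}$, and $\{u,v\}$ are all $HO$ in $G$, then the general edge operation of Section 2 with $A_1=\{u\}$, $A_2=\{v\}$ collapses either to removing $uv$ (when $uv\in E(G)$) or to adding $uv$ (when $uv\notin E(G)$); in either case Proposition \ref{HHH} yields $\Delta\nu=-2$, which is precisely the dichotomy in the conclusion.

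To locate such a pair I would rewrite the three $HO$ conditions as linear algebra on $L:=\mathrm{Ker}(N)$. For every vertex $w$, define the evaluation functional $\phi_w\in L^*$ by $\phi_w(\boldsymbol{\ell})=\boldsymbol{\ell}(w)$. Remark \ref{rem} tells us that $\{w\}$ is $HO$ iff $\phi_w\ne 0$. Applying observation (\emph{O 3}) to a pair, $\{u,v\}$ is solvable iff $\boldsymbol{\ell}(u)+\boldsymbol{\ell}(v)=0$ for every $\boldsymbol{\ell}\in L$, equivalently iff $\phi_u=\phi_v$ in $L^*$; thus $\{u,v\}$ is $HO$ iff $\phi_u\ne\phi_v$. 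So I am looking for a pair of vertices whose evaluation functionals are distinct and both nonzero.

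The existence of such a pair reduces to a one-line dimension count, which is the only actual step of the argument. The family $\{\phi_w:w\in V(G)\}$ spans $L^*$, since otherwise some nonzero $\boldsymbol{\ell}\in L$ would satisfy $\boldsymbol{\ell}(w)=\phi_w(\boldsymbol{\ell})=0$ for every vertex $w$, forcing $\boldsymbol{\ell}=\mathbf{0}$. Because $\dim L^*=\nu(G)\ge 2$, this spanning family contains two linearly independent functionals $\phi_u,\phi_v$, which are automatically nonzero and distinct. Proposition \ref{HHH} applied to $A_1=\{u\}$, $A_2=\{v\}$ then delivers $\nu(G^*)=\nu(G)-2$ with $G^*\in\{G-uv,\,G+uv\}$, completing the proof. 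I do not anticipate a serious obstacle: the only delicate point is the translation of $HO$ for singletons and pairs into the nonvanishing and distinctness of the $\phi_w$'s, after which the hypothesis $\nu(G)\ge 2$ is exactly what forces two independent $\phi$'s to appear.
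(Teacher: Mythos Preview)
Your proof is correct and follows essentially the same approach as the paper: both reduce to Proposition \ref{HHH} applied to singletons, and both hinge on the observation that $\{u\},\{v\},\{u,v\}$ are all $HO$ precisely when the evaluation functionals $\phi_u,\phi_v$ on $\mathrm{Ker}(N)$ are nonzero and distinct. The only difference is packaging: the paper argues by contradiction (if no such pair exists, every nonzero null pattern is forced to equal the characteristic vector of the half activated vertices, so $\nu(G)=1$), whereas you argue directly via the spanning/dimension observation that $\{\phi_w\}$ spans $L^*$ and hence contains two independent functionals when $\dim L^*\ge 2$; these are contrapositive versions of the same linear-algebraic fact.
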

\begin{proof}
Assume that the claim does not hold true. Then, for any pair of half activated vertices $u$, $v$, the set $\{u,v\}$ is not $HO$. Otherwise, removal or addition of the edge $uv$, depending on whether $u$ and $v$ are adjacent in $G$ or not, would decrease the nullity by $2$ by Proposition \ref{HHH}. This is equivalent to say that $\mathbf{x}_{\{u,v\}}$ is solvable. Hence $\mathbf{x}_{\{u,v\}}$ is orthogonal to every null pattern by (\emph{O 3}).

 Now, assuming $\nu(G)$ is nonzero, let $\boldsymbol{\ell}$ be a nonzero null pattern. Since $\boldsymbol{\ell}$ is nonzero there exists a half activated vertex $u$ such that $\boldsymbol{\ell}(u)=1$ (see Remark \ref{rem}). Let $v$ be any half activated vertex other than $u$. Then, $0=\mathbf{x}_{\{u,v\}}\cdot \boldsymbol{\ell}=\boldsymbol{\ell}(u)+\boldsymbol{\ell}(v)= 1+\boldsymbol{\ell}(v)$. Hence $\boldsymbol{\ell}(v)=1$. Moreover, $\boldsymbol{\ell}(w)=0$ for all $w$ which is either never or always activated. Otherwise $\{w\}$ would not be solvable by (\emph{O 3}). Hence $\boldsymbol{\ell}$ is the characteristic vector of the set of half activated vertices. Since $\boldsymbol{\ell}$ was an arbitrary nonzero null pattern, this implies $Ker(N)$ is one dimensional. So $\nu(G)=1$.
\end{proof}

Another result we have is the following.
\begin{thm}
Suppose $G$ is an always solvable graph where removal of every edge increases the nullity. Then, every always activated vertex of $G$ has even degree and every never activated vertex of $G$ has odd degree.
\end{thm}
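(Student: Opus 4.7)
The plan is to translate the assumption "removing any edge increases $\nu$" into a concrete condition on the entries of $N^{-1}$ at each vertex, and then read off the degree parity from the fundamental identity $N\mathbf{r}_u=\mathbf{x}_{\{u\}}$ evaluated at $u$.

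First I would set up the standard objects. Since $G$ is always solvable, $N$ is invertible over $\mathbb{Z}_2$, so the odd dominating pattern $\mathbf{s}:=N^{-1}\mathbf{1}$ and, for each vertex $u$, the solving pattern $\mathbf{r}_u:=N^{-1}\mathbf{x}_{\{u\}}$ are uniquely determined. I will use three basic facts: (i) every vertex is either $AO$ or $NO$ with $\mathbf{s}(u)=1$ iff $u$ is $AO$; (ii) $\mathbf{r}_u(u)=\mathbf{s}(u)$, which follows from Lemma \ref{prop1} applied to $A=\{u\}$; (iii) the solving pattern $\mathbf{p}_u$ of $\overline{\mathbf{x}_{\{u\}}}$ satisfies $\mathbf{p}_u=\mathbf{r}_u+\mathbf{s}$, so $\mathbf{p}_u(v)=\mathbf{r}_u(v)+\mathbf{s}(v)$.

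Next, for every edge $uv$ of $G$ I apply the general operation with $A_1=\{u\}$, $A_2=\{v\}$ (so $G^*=G-uv$) and use Table~\ref{table1} with the assumption $\Delta\nu>0$. The row $A_1,A_2$ both $NO$ forces $\{v\}$ to be $\overline{\mathbf{x}_{\{u\}}}$-$AO$, i.e.\ $\mathbf{p}_u(v)=1$, which via $\mathbf{s}(v)=0$ gives $\mathbf{r}_u(v)=1$. The row $A_1$ $NO$, $A_2$ $AO$ forces $\mathbf{p}_u(v)=0$, and since $\mathbf{s}(v)=1$ this again gives $\mathbf{r}_u(v)=1$. The row $A_1,A_2$ both $AO$ forces $\mathbf{p}_u(v)=1$, which with $\mathbf{s}(v)=1$ gives $\mathbf{r}_u(v)=0$. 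The case $u$ $AO$, $v$ $NO$ is handled symmetrically by swapping the roles of $A_1$ and $A_2$ and using $\mathbf{r}_u(v)=\mathbf{r}_v(u)$. Collecting these, I obtain the clean dichotomy
\[
\mathbf{r}_u(v)=\begin{cases} 1 & \text{if $u$ is $NO$, for every neighbor $v$,}\\[2pt] 1 & \text{if $u$ is $AO$ and $v$ is $NO$,}\\[2pt] 0 & \text{if $u$ is $AO$ and $v$ is $AO$.}\end{cases}
\]
This bookkeeping step is where I expect the most care will be needed, since the propositions express everything in terms of activation types of sets rather than scalar entries, and the four relevant table rows each have to be converted through $\mathbf{p}_u=\mathbf{r}_u+\mathbf{s}$.

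Finally, I evaluate the defining equation $N\mathbf{r}_u=\mathbf{x}_{\{u\}}$ at the vertex $u$ itself, i.e.\ $\mathbf{x}_{N[u]}\cdot\mathbf{r}_u=1$, which unfolds to $\mathbf{s}(u)+\sum_{v\in N(u)}\mathbf{r}_u(v)=1$. If $u$ is $NO$, the left-hand sum counts $\deg(u)\pmod 2$, forcing $\deg(u)$ odd. If $u$ is $AO$, the sum equals the number of $NO$ neighbors of $u$ modulo $2$, and the equation forces this number to be even; combining with the odd-dominating identity $\mathbf{x}_{N[u]}\cdot\mathbf{s}=1$ (which in the $AO$ case forces the number of $AO$ neighbors of $u$ to be even) yields $\deg(u)$ even. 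This completes the argument; the only nontrivial obstacle is the case analysis in the middle step, and once the dichotomy for $\mathbf{r}_u(v)$ is in hand the conclusion is a one-line parity count.
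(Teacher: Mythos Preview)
Your proposal is correct and follows essentially the same approach as the paper: both arguments use the case analysis from Propositions \ref{NN0}--\ref{AA1} (equivalently Table~\ref{table1}) to pin down the values of the relevant solving pattern at each neighbor, and then read off the degree parity from the closed-neighborhood identity at $u$. The only cosmetic difference is that the paper works directly with $\mathbf{p}_u$ and the equation $\mathbf{x}_{N[u]}\cdot\mathbf{p}_u=0$, whereas you convert everything to $\mathbf{r}_u=N^{-1}\mathbf{x}_{\{u\}}$ (exploiting the symmetry of $N^{-1}$ for the mixed case) and use $\mathbf{x}_{N[u]}\cdot\mathbf{r}_u=1$; these are equivalent via $\mathbf{p}_u=\mathbf{r}_u+\mathbf{s}$.
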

\begin{proof}
Since $G$ is always solvable, all vertices are either always activated or never activated. Let $u$ be an always activated vertex of $G$. For any never activated neighbor $v$ of $u$, since the removal of the edge $uv$ increases the nullity, by Proposition \ref{NA1} and Lemma \ref{lem3}, $\mathbf{p}(v)= \mathbf{x}_{\{v\}}\cdot \mathbf{p}= 1$, where $\mathbf{p}$ is the solving pattern for $\overline{\mathbf{x}_{\{u\}}}$. Similarly, for any always activated neighbor $w$ of $u$, $\mathbf{p}(w)=1$ by Proposition \ref{AA0}. Moreover, $\mathbf{p}(u)=\mathbf{x}_{\{u\}}\cdot \mathbf{p}=0$ by Lemma \ref{lem2}. Consequently, $\mathbf{x}_{N[u]} \cdot \mathbf{p}=0$ if and only if the degree of $u$ is even. On the other hand, $\mathbf{x}_{N[u]} \cdot \mathbf{p}= (N\mathbf{p})(u)=\overline{\mathbf{x}_{\{u\}}}(u)=0.$ Hence degree of $u$ is even.

Let $\tilde{u}$ be a never activated vertex of $G$. For any never activated neighbor $\tilde{v}$ of $\tilde{u}$, $\mathbf{\tilde{p}}(\tilde{v})=1$ by Proposition \ref{NN0}, where $\mathbf{\tilde{p}}$ is the vector satisfying $N\mathbf{\tilde{p}}=\overline{\mathbf{x}_{\{\tilde{u}\}}}$. On the other hand, for any always activated neighbor $\tilde{w}$ of $\tilde{u}$, $\mathbf{\tilde{p}}(\tilde{w})=0$ by Proposition \ref{NA1}. $\mathbf{\tilde{p}}(\tilde{u})=\mathbf{x}_{\{\tilde{u}\}}\cdot \mathbf{\tilde{p}}=0$ by Lemma \ref{lem2}. Consequently, the number of never activated neighbors  of $\tilde{u}$ is even since $\mathbf{x}_{N[\tilde{u}]} \cdot \mathbf{\tilde{p}}=0$. On the other hand, since there is no half activated vertex, a never activated vertex has odd number of always activated neighbors. So the degree of $\tilde{u}$, which is the sum of never and always activated neighbors of $\tilde{u}$, is odd.
\end{proof}

We finish the set of our results by giving a characterization of always solvable graphs. First we need to name some types of edge additions.

\begin{defn}
Let $G$ be a graph with a non-adjacent pair of vertices $u$ and $v$. Let $\mathbf{p}$ be a solving pattern for $\overline{\mathbf{x}_{\{u\}}}$ in the case $\{u\}$ is solvable. Then adding the edge $uv$ to $G$ is called a

\emph{Type-1} edge addition if $u$ is never activated, $v$ is always activated, and $\mathbf{p}(v)=1$,

\emph{Type-2} edge addition if $u$ and $v$ are always activated, and $\mathbf{p}(v)=0$,

\emph{Type-3} edge addition if $u$ is never activated, $v$ is always activated, and $\mathbf{p}(v)=0$,

\emph{Type-4} edge addition if $u$ and $v$ are always activated, and $\mathbf{p}(v)=1$,

\emph{Type-5} edge addition if $u$ is always activated, $v$ is half activated,

\emph{Type-6} edge addition if $u$ and $v$ are half activated, and $\{u,v\}$ is half odd activated.
\end{defn}
\begin{thm}
\label{thmc}
Suppose $G$ is a graph with $n$ edges where $n\neq 0$. Then, $G$ is always solvable if and only if $G$ is obtained from an always solvable graph with $n-1$ edges by a Type-1 or Type-2 edge addition or it is obtained from an always solvable graph with $n-2$ edges by a Type-3 or a Type-4 edge addition followed by a Type-5 or Type-6 edge addition.
\end{thm}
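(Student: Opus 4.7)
The proof is a bidirectional argument anchored in Table~\ref{table1}. The first step is to identify each Type with a specific proposition from Section~2. Unpacking the parity condition on $\mathbf{p}(v)$ using Proposition~\ref{prop2} (since in an always solvable graph the solving pattern $\mathbf{p}$ for $\overline{\mathbf{x}_{\{u\}}}$ is unique), Type-1 is exactly the hypothesis of Proposition~\ref{NA1}, Type-2 of Proposition~\ref{AA0} (both with $\Delta\nu=0$), Type-3 of Proposition~\ref{NA0}, Type-4 of Proposition~\ref{AA1} (both with $\Delta\nu=+1$), Type-5 of Proposition~\ref{AH} ($\Delta\nu=-1$), and Type-6 of the corresponding HH-family proposition with $\Delta\nu=-1$, namely Proposition~\ref{HHA}; this last identification relies on the observation that whenever a Type-5 or Type-6 step is applied after a Type-3 or Type-4 step the intermediate graph has nullity $1$, and in such a graph a pair of two distinct $HO$ vertices is automatically solvable rather than $HO$ (the unique nonzero null pattern takes value $1$ at both vertices, so it has zero dot product with $\mathbf{x}_{\{u,v\}}$), which rules out Proposition~\ref{HHH}. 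With these identifications, the backward direction of the equivalence is immediate: Type-1 or Type-2 preserves $\nu=0$, and Type-3 or Type-4 followed by Type-5 or Type-6 yields net $\Delta\nu=0$.

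For the forward direction, suppose $G$ is always solvable with at least one edge. The first key substep is to exhibit an edge $uv$ of $G$ at least one of whose endpoints is always activated. This is guaranteed because the closed neighborhood matrix decomposes block-diagonally across connected components, so every component is itself always solvable, and no such component can consist entirely of never activated vertices (otherwise the restriction of the odd dominating pattern would be the zero vector and would fail to satisfy $N\mathbf{s}=\mathbf{1}$ on that component). Hence in any component containing an edge, either all vertices are $AO$ and every edge is $AO$-$AO$, or both $NO$ and $AO$ vertices appear and connectivity forces a $NO$-$AO$ edge. Set $G':=G-uv$. Since $G$ has no half activated vertices, the transition $G\to G'$ is governed by Proposition~\ref{NA0} or~\ref{NA1} (when one endpoint is $NO$ and the other $AO$) or by Proposition~\ref{AA0} or~\ref{AA1} (when both are $AO$), so $\nu(G')\in\{0,1\}$.

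If $\nu(G')=0$ then $G'$ is already always solvable, and by inspecting activation types and the value of $\mathbf{p}(v)$ in $G'$, the reverse addition $G'\to G$ matches Type-1 or Type-2. If instead $\nu(G')=1$, I would apply Theorem~\ref{thmr} to $G'$ to obtain an edge $u'v'$ whose removal decreases the nullity, giving $G'':=G'-u'v'$ with $\nu(G'')=0$, so $G''$ is always solvable. The reverse addition $G''\to G'$ has $\Delta\nu=+1$ and is therefore Proposition~\ref{NA0} (Type-3) or Proposition~\ref{AA1} (Type-4), according as $G'\to G''$ was governed by Proposition~\ref{AH} or~\ref{HHA}. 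The subsequent addition $G'\to G$ has $\Delta\nu=-1$; it is Proposition~\ref{AH} (Type-5) when $G\to G'$ was governed by Proposition~\ref{NA0}, and Proposition~\ref{HHA} (Type-6) when it was governed by Proposition~\ref{AA1}, where the HHA branch is forced over HHH by the null-pattern argument from the first paragraph. I expect the main obstacle to lie precisely in this bookkeeping: tracking which proposition governs each transition and translating the propositions' $\overline{\mathbf{x}_{\{u\}}}$-$NO$/$AO$ hypotheses into the parity conditions $\mathbf{p}(v)\in\{0,1\}$ featured in the Type definitions, together with ruling out the HHH case in the final step.
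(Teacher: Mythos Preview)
Your argument is correct and follows the paper's approach closely: find an edge with an always activated endpoint, remove it, branch on whether $\nu(G')$ is $0$ or $1$, and in the latter case invoke Theorem~\ref{thmr} to peel off a second edge. Your connected-components justification for the first step and your explicit null-pattern argument ruling out Proposition~\ref{HHH} in a nullity-$1$ graph are minor elaborations of points the paper treats more tersely (the latter in fact clarifies what appears to be a slip in the stated Type-6 condition, since the paper's own sufficiency clause invokes Proposition~\ref{HHA}, not~\ref{HHH}).
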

\begin{proof}
If $G$ is always solvable then it has no half activated vertex. On the other hand, $G$ must have an always activated vertex $v$ with nonzero degree. Otherwise $G$ would have a never activated vertex $w$ with no always activated neighbors since $G$ is nonempty. But then $\mathbf{x}_{N[w]} \cdot \mathbf{s}=0$ where $\mathbf{s}$ is the odd dominating pattern of $G$, which is a contradiction. Therefore, by Proposition \ref{NA0}-\ref{AA1}, either (\emph{Case 1}) there exists an edge incident to $u$ whose removal keeps the nullity $0$, or (\emph{Case 2})  removal of every edge incident to $u$ increases the nullity by $1$.

First assume that \emph{Case 1} holds. Then there exists a vertex $u$ adjacent to $v$ such that $G':=G-uv$ is always solvable. By Proposition \ref{NA1} and  \ref{AA0}, types of activations of $u$ and $v$ in $G'$ are equal to the types of activations in $G$. So $u$ and $v$ is either a never activated, always activated or always activated, always activated pair in $G'$. Since the nullity of $G=G'+uv$ is the same as the nullity of $G'$ which is $0$, this implies $G$ is obtained from $G'$ by a \emph{Type-1} or \emph{Type-2} edge addition by Proposition \ref{NA0}-\ref{AA1}.

Second assume that \emph{Case 2} holds. Then $G'=G-uv$ has nullity $1$ for all $u$ adjacent to $v$ in $G$. Equivalently, adding the edge $uv$ to $G'$ decreases the nullity by $1$. Considering all possible scenarios stated in Proposition \ref{NN0}-\ref{HHN} (or put together in Table 1), we see that adding the edge $uv$ to $G'$ can decrease the nullity of $G'$ by $1$ only under two cases which corresponds \emph{Type-5} and \emph{Type-6} edge additions. On the other hand, since the nullity of $G'$ is $1$, there exists an edge $e$ of $G'$ whose removal decreases the nullity to $0$ by Theorem \ref{thmr}. Equivalently, adding the edge $e$ to the always solvable graph $G'':=G'-e$ increases the nullity by $1$. Again, considering all possible scenarios stated in Proposition \ref{NN0}-\ref{HHN}, we see that this is only possible under two cases which corresponds to \emph{Type-3} or \emph{Type-4} edge addition.

Sufficiency part of the proof is trivial by definitions of the types of the edge additions and by Proposition \ref{NA0}, \ref{NA1}, \ref{AA0}, \ref{AA1}, \ref{AH}, and \ref{HHA}
\end{proof}

\textbf{Declaration of Competing Interests} The author declares that he has no
known competing financial interests or personal relationships that could have appeared to influence the work reported in this paper. \medskip

\bibliographystyle{plain}

\end{document}